\newtheorem{theorem}{Theorem}
\newtheorem{proposition}[theorem]{Proposition}
\newtheorem{lemma}[theorem]{Lemma}
\newtheorem{corollary}[theorem]{Corollary}
\theoremstyle{definition}
\newtheorem{definition}[theorem]{Definition}
\newtheorem{remark}[theorem]{Remark}
\newtheorem{example}[theorem]{Example}
\numberwithin{equation}{section}
\numberwithin{theorem}{section}
\newsavebox\tmpbox
\newcommand*{\rom}[1]{\ensuremath{\mathrm{\expandafter\@slowromancap\romannumeral #1@}}}
\newcommand{\norm}[1]{\ensuremath{\left| #1 \right|}}
\newcommand{\even}{\ensuremath{\mathrm{ev}}}
\newcommand{\odd}{\ensuremath{\mathrm{od}}}
\newcommand{\cop}{\ensuremath{\mathrm{cp}}}
\newcommand{\divides}[2]{\ensuremath{{#1}\, |\,{#2}}}
\newcommand{\bdm}[1]{\ensuremath{\mbox{\boldmath $#1$}}}
\newcommand{\Mod}[1]{\ensuremath{\ (\mathrm{mod}\ #1)}}
\newcommand{\floor}[1]{\ensuremath{\lfloor #1\rfloor}}
\begin{document}
\title[Polynomial sequences and the minimal polynomial of $2\cos (2\pi /n)$]
{Polynomial sequences related to Chebyshev polynomials\\
and the minimal polynomial of $\bdm{2\cos (2\pi /n)}$}
\author{Mamoru Doi}
\address{11-9-302 Yumoto-cho, Takarazuka, Hyogo 665-0003, Japan}
\email{doi.mamoru@gmail.com}
\maketitle
\noindent{\bfseries Abstract.}~
In this paper we consider the minimal polynomial $\psi_n(x)$ of $2\cos (2\pi /n)$.
We introduce some polynomial sequences with the same recurrence relation as the rescaled Chebyshev polynomials
$t_n(x)=2\, T_n(x/2)$ of the first kind,
which turn out to be related to those of various kinds, all coming from those of the second kind.
We see that $t_n(x)\pm 2=2(T_n(x/2)\pm 1)$ are divisible by the square of either of these polynomials.
Then by appropriately removing unnecessary factors from these polynomials,
we can easily calculate $\psi_n(x)$ without recursion, which improves Barnes' result in $1977$.
As an appendix, we give a compact table of the minimal polynomials $\psi_n(x)$ of $2\cos (2\pi /n)$ for $n\leqslant 120$.
\section{Introduction}\label{sec:intro}

The minimal polynomials $\Psi_n(x)$ and $\psi_n(x)$ of $\cos (2\pi /n)$ and $2\cos (2\pi /n)$ related by
\begin{equation}\label{eq:psi_Psi}
\psi_n(x)=2^{\deg\psi_n(x)}\Psi_n(x/2)
\end{equation}
where $\deg\psi_n(x)$ is given in $\eqref{eq:psi}$ below,
have long been studied in relation to cyclotomic polynomials and Chebyshev polynomials.
It is well-known that $2\cos (2\pi /n)$ is an algebraic integer for each natural number $n$,
so that $\psi_n(x)$ is a monic polynomial with integer coefficients.
This follows from the classical result of Lehmer \cite{Leh33} that $\psi_n(x)$ for $n>2$
is related to the $n$-th cyclotomic polynomial $\Phi_n(x)$ by the formula
\begin{equation*}
\psi_n\left( x+x^{-1}\right) =x^{-\phi (n)/2}\Phi_n(x),\quad\text{where}\quad\Phi_n(x)
=\prod_{\substack{0<k<n,\\ \gcd(k,n)=1}}\left( x-e^{2k\pi i/n}\right)
\end{equation*}
and $\phi (n)$ denotes Euler's totient function.

In $1977$, Barnes \cite{Bar77} proved using cyclotomic polynomials some formulas
as will be stated in Theorem $\ref{thm:Barnes}$ below,
which express $\psi_n(x)$ in terms of polynomials related to the Chebyshev polynomials of the second, third and fourth kinds.
However, until recently this result seems to have been almost forgotten in spite of its usefulness.

Meanwhile, in $1993$ Watkins and Zeitlin \cite{WZ93} calculated the minimal polynomial $\Psi_n(x)$ of $\cos (2\pi /n)$
using the Chebyshev polynomials $T_n(x)$ of the first kind.
In addition to $\eqref{eq:psi_Psi}$, rescaling $T_n(x)$ to monic polynomials $t_n(x)$ with integer coefficients by
\begin{equation}\label{eq:t}
t_n(x)=2\, T_n(x/2),\quad\text{so that}\quad t_n(2\cos\theta )=2\cos n\theta ,
\end{equation}
their main results are rephrased in terms of $\psi_n(x)$ and $t_n(x)$, instead of $\Psi_n(x)$ and $T_n(x)$, as follows.
\begin{theorem}[\cite{WZ93}, p. $473$, Lemma]\label{thm:WZ_lem}
The minimal polynomial $\psi_n(x)$ of $2\cos (2\pi/n)$ is given by
\begin{equation}\label{eq:psi}
\psi_n(x)=\prod_{\substack{0<k<n/2,\\ \gcd(k,n)=1}}\left( x-2\cos\frac{2k}{n}\pi\right),\quad
\deg\psi_n(x)=\begin{dcases}1&\text{if }n=1,2,\\ \phi (n)/2&\text{if }n>2.\end{dcases}
\end{equation}
\end{theorem}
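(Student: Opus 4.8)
The plan is to realize $2\cos(2\pi/n)$ as $\zeta+\zeta^{-1}$ with $\zeta=e^{2\pi i/n}$ a primitive $n$-th root of unity, and to read off its minimal polynomial from the Galois theory of the cyclotomic field $\mathbb{Q}(\zeta)$. Since $\zeta+\zeta^{-1}$ lies in the Galois extension $\mathbb{Q}(\zeta)/\mathbb{Q}$, its minimal polynomial over $\mathbb{Q}$ is $\prod_{\beta}(x-\beta)$, where $\beta$ runs over the \emph{distinct} Galois conjugates of $\zeta+\zeta^{-1}$; the whole argument thus reduces to enumerating these conjugates.

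First I would recall that $\mathrm{Gal}(\mathbb{Q}(\zeta)/\mathbb{Q})\cong(\mathbb{Z}/n\mathbb{Z})^*$, with $\sigma_a$ acting by $\sigma_a(\zeta)=\zeta^a$ for each $a$ coprime to $n$. Applying $\sigma_a$ to $\zeta+\zeta^{-1}$ gives $\zeta^a+\zeta^{-a}=2\cos(2a\pi/n)$, so the full set of conjugates is $\{2\cos(2a\pi/n):0<a<n,\ \gcd(a,n)=1\}$. The next step is to decide when two such values coincide: since $\cos$ is even and $2\pi$-periodic, $2\cos(2a\pi/n)=2\cos(2b\pi/n)$ holds precisely when $a\equiv\pm b\pmod{n}$. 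Hence the distinct conjugates are indexed by the orbits of the involution $a\mapsto n-a$ on the coprime residues, and each orbit has a unique representative with $0<a<n/2$. This matches the index set of the asserted product and proves the formula for $\psi_n(x)$ when $n>2$.

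For the degree I would observe that, for $n>2$, the involution $a\mapsto n-a$ is fixed-point-free on $\{a:0<a<n,\ \gcd(a,n)=1\}$: a fixed point would require $n$ even and $a=n/2$, but then $\gcd(n/2,n)=n/2>1$. Thus the $\phi(n)$ coprime residues split into $\phi(n)/2$ two-element orbits, giving $\deg\psi_n=\phi(n)/2$. The cases $n=1,2$ are degenerate, since there $2\cos(2\pi/n)$ equals $2$ and $-2$, both rational; so $\psi_1(x)=x-2$ and $\psi_2(x)=x+2$, each of degree $1$, and these must be recorded separately because $\phi(n)/2=1/2$ is not an integer.

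The step I expect to demand the most care is the passage from ``these are the conjugates'' to ``this is the minimal polynomial,'' that is, confirming that the Galois group acts transitively on the listed roots so that the product is genuinely irreducible and not merely a multiple of $\psi_n(x)$. This transitivity is in fact immediate here, since every conjugate arises as some $\sigma_a(\zeta+\zeta^{-1})$, but it is the point at which the argument must be phrased precisely. As a shorter route that instead takes Lehmer's identification of $\psi_n(x)$ as the minimal polynomial for granted, one may substitute directly into his formula $\psi_n(x+x^{-1})=x^{-\phi(n)/2}\Phi_n(x)$: pairing each factor $(x-\zeta^a)$ of $\Phi_n(x)$ with $(x-\zeta^{-a})$ produces $x^2-2\cos(2a\pi/n)\,x+1$, and after dividing by $x^{\phi(n)/2}$ and setting $y=x+x^{-1}$ the product collapses to exactly the claimed expression for $\psi_n(y)$.
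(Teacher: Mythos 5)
Your proof is correct, but there is nothing in the paper to compare it against: the paper does not prove this statement at all, it imports it by citation as a known lemma of Watkins and Zeitlin \cite{WZ93}. Your argument is the standard Galois-theoretic one (and close to the source's own): realize $2\cos(2\pi/n)=\zeta+\zeta^{-1}$ inside $\mathbb{Q}(\zeta)$, observe that the Galois orbit is $\{2\cos(2a\pi/n):\gcd(a,n)=1\}$, and pass to distinct values via the involution $a\mapsto n-a$, whose fixed-point-freeness for $n>2$ gives both the product formula and the degree $\phi(n)/2$; transitivity of the Galois action on the orbit settles irreducibility, which you rightly single out as the step needing precision. Your separate treatment of $n=1,2$ is not just a degenerate afterthought but genuinely necessary: as literally displayed, the product is empty for $n=1,2$ (giving $1$, not $x-2$ or $x+2$), so the formula must be read as applying only for $n>2$, with $\psi_1(x)=x-2$ and $\psi_2(x)=x+2$ recorded on the side — a point the quoted statement glosses over and your proof makes explicit. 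The one tension worth noting is that this paper's declared aim is to avoid cyclotomic machinery in its own arguments, whereas both of your routes (the Galois-orbit argument and the alternative via Lehmer's identity $\psi_n(x+x^{-1})=x^{-\phi(n)/2}\Phi_n(x)$) necessarily use it; that is presumably why the author cites the lemma rather than reproving it. What your write-up buys is self-containedness; what the citation buys is consistency with the paper's elementary framework.
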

\begin{theorem}[\cite{WZ93}, p. $471$, Theorem]\label{thm:WZ}
We have
\begin{equation*}
\prod_{\divides{d}{n}}\psi_d(x)=\prod_{k=0}^s\left( x-2\cos\frac{2k}{n}\pi\right) =\begin{dcases}
t_{s+1}(x)-t_s(x)&\text{if }n=2s+1\text{ is odd},\\
t_{s+1}(x)-t_{s-1}(x)&\text{if }n=2s\text{ is even}.
\end{dcases}
\end{equation*}
\end{theorem}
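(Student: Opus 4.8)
The plan is to prove the two displayed equalities separately, exploiting that every polynomial appearing is monic. Write $P_n(x):=\prod_{k=0}^s\bigl(x-2\cos\frac{2k}{n}\pi\bigr)$ for the middle expression. Since $\cos$ is strictly decreasing on $[0,\pi]$ and the arguments $2j\pi/n$ for $0\le j\le s$ all lie in $[0,\pi]$, the $s+1$ numbers $2\cos\frac{2j}{n}\pi$ are pairwise distinct, so $P_n(x)$ is a monic polynomial of degree $s+1$ with simple roots. The left-hand equality then reduces to matching root sets, and the right-hand equality is a trigonometric computation.

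For the first equality $\prod_{d\mid n}\psi_d(x)=P_n(x)$ I would argue by identifying roots, using that by $\eqref{eq:psi}$ the roots of $\psi_d$ are the $2\cos\frac{2k}{d}\pi$ with $0<k<d/2$ and $\gcd(k,d)=1$. Reducing $j/n$ to lowest terms $k/d$ with $d\mid n$, $0\le k\le d/2$, $\gcd(k,d)=1$ sets up a bijection between $\{0,1,\dots,s\}$ and the disjoint union over $d\mid n$ of these root-index sets, under which $2\cos\frac{2j}{n}\pi=2\cos\frac{2k}{d}\pi$; the endpoints $j=0$ and (when $n$ is even) $j=s$ correspond to the special factors $\psi_1(x)=x-2$ and $\psi_2(x)=x+2$. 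Distinct $\psi_d$ have disjoint root sets because $2\cos(2\pi/d)$ determines $d$, so $\prod_{d\mid n}\psi_d(x)$ is monic with simple roots, namely the $2\cos\frac{2j}{n}\pi$. To confirm nothing is miscounted I would check that the degrees agree via $\sum_{d\mid n}\phi(d)=n$: in the odd case $\deg\psi_1+\sum_{d\mid n,\,d>1}\phi(d)/2=1+(n-1)/2=s+1$, and in the even case $\deg\psi_1+\deg\psi_2+\sum_{d\mid n,\,d>2}\phi(d)/2=2+(n-2)/2=s+1$. Since two monic polynomials of equal degree with the same roots coincide, the equality follows.

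For the second equality I would pass through $t_m(2\cos\theta)=2\cos m\theta$ from $\eqref{eq:t}$ and apply sum-to-product. When $n=2s+1$ is odd,
\begin{equation*}
t_{s+1}(2\cos\theta)-t_s(2\cos\theta)=2\cos(s+1)\theta-2\cos s\theta=-4\sin\tfrac{(2s+1)\theta}{2}\sin\tfrac{\theta}{2},
\end{equation*}
which vanishes at $\theta=\frac{2j}{n}\pi$ for $j=0,1,\dots,s$ because there $\frac{(2s+1)\theta}{2}=j\pi$; this yields $s+1$ distinct roots of the monic degree-$(s+1)$ polynomial $t_{s+1}(x)-t_s(x)$, forcing $t_{s+1}(x)-t_s(x)=P_n(x)$. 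When $n=2s$ is even,
\begin{equation*}
t_{s+1}(2\cos\theta)-t_{s-1}(2\cos\theta)=2\cos(s+1)\theta-2\cos(s-1)\theta=-4\sin(s\theta)\sin\theta,
\end{equation*}
which vanishes at $\theta=\frac{2j}{n}\pi=\frac{j}{s}\pi$ for $j=0,\dots,s$; this again produces the $s+1$ distinct roots of the monic degree-$(s+1)$ polynomial $t_{s+1}(x)-t_{s-1}(x)$, giving the claimed identity.

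The routine parts are the two trigonometric reductions and the degree bookkeeping. The one place demanding care is the first equality: one must check that the lowest-terms reduction $j/n\mapsto k/d$ really is a bijection onto the union of root-index sets and that the boundary indices $j=0$ and $j=s$ are matched to $\psi_1$ and $\psi_2$ rather than double-counted or omitted. I expect this combinatorial matching, together with the observation that each $\psi_d$ contributes simple and mutually disjoint roots, to be the crux; once it is in place, equality of the two monic polynomials is immediate.
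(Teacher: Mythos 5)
Your proof is correct, but there is an important point of context: the paper does not prove Theorem \ref{thm:WZ} at all --- it is quoted as background, with proof deferred to Watkins and Zeitlin \cite{WZ93} --- so there is no in-paper proof to compare against. Judged on its own merits, your argument is complete: the lowest-terms reduction $j/n\mapsto k/d$ does give a bijection between $\{0,1,\dots,s\}$ and the disjoint union of the root-index sets of the $\psi_d$ (with $j=0$ and, for even $n$, $j=s$ absorbed by $\psi_1(x)=x-2$ and $\psi_2(x)=x+2$), disjointness of the root sets follows from uniqueness of lowest-terms representations together with injectivity of $\cos$ on $[0,\pi]$, the totient count $\sum_{d\mid n}\phi(d)=n$ confirms the degrees, and the sum-to-product computations correctly exhibit the $s+1$ distinct roots of the monic degree-$(s+1)$ polynomials $t_{s+1}(x)-t_s(x)$ and $t_{s+1}(x)-t_{s-1}(x)$. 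It is worth noting that your method is essentially the same toolkit the paper itself uses later for its main result: your root-matching via $d=\gcd$ and lowest terms is the device of step $(\mathrm{a}.3)$ in the proof of Theorem \ref{thm:main}, and your degree bookkeeping is step $(\mathrm{a}.2)$ there; where you obtain the trigonometric factorizations by sum-to-product, the paper instead derives the analogous factorizations (Theorem \ref{thm:factor_t} and Corollary \ref{cor:factor_pq}) algebraically from the representation of $t_n(x)$ and $c_n(x)$ in terms of the roots $\lambda_\pm$ of $\lambda^2-x\lambda+1=0$. The two routes are interchangeable here; the algebraic one generalizes more smoothly to the identities like \eqref{eq:prod_c} that the paper needs, while yours is self-contained and elementary.
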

Unlike Barnes' method, calculating $\psi_n(x)$ by Theorem $\ref{thm:WZ}$
requires recursive calculations of $\psi_d(x)$ for $d<n$ with $\divides{d}{n}$.
Recently there have been obtained many such formulas
for factoring various polynomials by some of the minimal polynomials $\psi_n(x)$ or $\Psi_n(x)$
(see e.g. G\"{u}rta\c{s} \cite{Gur17, Gur22}, Wolfram \cite{Wol22-1, Wol22-2} and K\'{e}ri \cite{Ker22}).

In this paper, we shall partially improve Barnes' result in $1977$ without using cyclotomic polynomials
to obtain simpler formulas to calculate the minimal polynomial $\psi_n(x)$ of $2\cos (2\pi /n)$,
or more generally, the minimal polynomial $\psi_{m/n}(x)$ of $2\cos (m\pi /n)$ for any irreducible fraction $m/n\in (0,1)$.
For this purpose, we introduce polynomial sequences $\{ c_n(x)\}$, $\{ p^\pm_n (x)\}$ and $\{ q^\pm_n (x)\}$ as follows,
which will turn out to be obtained by rescaling the Chebyshev polynomials of various kinds
(see Remark $\ref{rem:main}$, $(2)$ below).
Let us first define a polynomial sequence $\{ c_n(x)\}$ by
\begin{equation}\label{eq:c_rec}
\begin{gathered}
c_{-2}(x)=-1,\quad c_{-1}(x)=0,\quad c_0(x)=1,\quad c_1(x)=x,\quad\text{and}\\
c_n(x)=x\cdot c_{n-1}(x)-c_{n-2}(x)\quad\text{for }n\geqslant 2.
\end{gathered}
\end{equation}
As will be proved in Proposition $\ref{prop:expand_c}$, we can also expand $c_n(x)$ as
\begin{equation}\label{eq:expand_c_1}
c_n(x)=\sum_{k=0}^{\floor{n/2}}(-1)^k\binom{n-k}{k}\, x^{n-2k}\quad\text{for }n\geqslant 0.
\end{equation}
Then polynomial sequences $\{ p^\pm_n(x)\}$ and $\{ q^\pm_n (x)\}$ are defined by
\begin{equation}\label{eq:pq_c}
p^\pm_n (x)=c_n(x)\pm c_{n-1}(x)\quad\text{and}\quad q^\pm_n (x)=c_n(x)\pm c_{n-2}(x)\quad\text{for }n\geqslant 0,
\end{equation}
where and whereafter all double signs correspond.
All these polynomial sequences have the same recurrence relation
as the rescaled Chebyshev polynomial sequence $\{ t_n(x)\}$,
but different initial values except for $\{ q^-_n(x)\}=\{ t_n(x)\}$ (see the beginning of Section $\ref{sec:props}$).
As will be seen in Theorem $\ref{thm:factor_t}$,
$t_n(x)+2$ and $t_n(x)-2$ are respectively divisible by $p^-_s(x)^2$ and $p^+_s(x)^2$ if $n=2s+1$ is odd, 
and $q^-_s(x)^2$ and $q^+_s(x)^2/x^2=c_{s-1}(x)^2$ if $n=2s$ is even.

We also need the following definition.
\begin{definition}\label{def:Pi}
We define a set $\Pi_i(n)$ by
\begin{equation*}
\Pi_i(n)=\set{p_1\cdots p_i<n\, |\, p_1,\dots ,p_i\text{ are distinct \emph{odd} prime divisors of }n}.
\end{equation*}
In particular, $\Pi_1(n)$ is the set of odd prime divisors of $n$.
Also, $\Pi_i(n)$ is empty if $i>\#\Pi_1(n)$ or $n=p_1\cdots p_i$ for distinct primes $p_1,\dots ,p_i$.
\end{definition}
Then our main result is stated as follows.
\begin{theorem}\label{thm:main}
Suppose $n>2$.
Let $m/n\in (0,1)$ be an irreducible fraction.
Then the minimal polynomial $\psi_{m/n}(x)$ of $2\cos (m\pi /n)$ is given as follows.

\noindent$(\mathrm{i})$ If both $n$ and $m$ are odd, the we have
\begin{align}
\psi_{m/n}(x)=\psi_{1/n}(x)=\psi_{2n}(x)
&=\left. p^-_{\floor{n/2}}(x)\middle/\prod_{2<d<n,\, \divides{d}{n}}\psi_{2n/d}(x)\right.
\label{eq:p-_psi}\\
&=p^-_{\floor{n/2}}(x)\cdot\prod_{i=1}^{\#\Pi_1(n)}
\left(\prod_{d\in\Pi_i(n)}p^-_{\floor{n/2d}}(x)\right)^{(-1)^i}.
\label{eq:psi_p-}
\end{align}
In particular, if $n=p^\ell$ for an odd prime $p$, then we have
\begin{equation*}
\psi_{m/n}(x)=\psi_{1/n}(x)=\psi_{2n}(x)=\begin{dcases}
p^-_{\floor{n/2}}(x)&\text{if }\ell =1,\\
\left. p^-_{\floor{n/2}}(x)\middle/ p^-_{\floor{n/2p}}(x)\right.&\text{if }\ell >1.
\end{dcases}
\end{equation*}
$(\mathrm{ii})$ If $n$ is odd and $m$ is even, then we have
\begin{align}
\psi_{m/n}(x)=\psi_{2/n}(x)=\psi_n(x)
&=\left. p^+_{\floor{n/2}}(x)\middle/\prod_{2<d<n,\,\divides{d}{n}}\psi_{n/d}(x)\right.
\label{eq:p+_psi}\\
&=p^+_{\floor{n/2}}(x)\cdot\prod_{i=1}^{\#\Pi_1(n)}
\left(\prod_{d\in\Pi_i(n)}p^+_{\floor{n/2d}}(x)\right)^{(-1)^i}.
\label{eq:psi_p+}
\end{align}
In particular, if $n=p^\ell$ for an odd prime $p$, then we have
\begin{equation*}
\psi_{m/n}(x)=\psi_{2/n}(x)=\psi_n(x)=\begin{dcases}
p^+_{\floor{n/2}}(x)&\text{if }\ell =1,\\
\left. p^+_{\floor{n/2}}(x)\middle/ p^+_{\floor{n/2p}}(x)\right.&\text{if }\ell >1.
\end{dcases}
\end{equation*}
$(\mathrm{iii})$ If $n=2^j n'$ is even and $m$ is odd, with $j>0$ and odd $n'$, then we have
\begin{align}
\psi_{m/n}(x)=\psi_{1/n}(x)=\psi_{2n}(x)
&=\left. q^-_{n/2}(x)\middle/\prod_{\substack{2<d\leqslant n',\, \divides{d}{n'}}}\psi_{2n/d}(x)\right.
\label{eq:q-_psi}\\
&=q^-_{n/2}(x)\cdot\prod_{i=1}^{\#\Pi_1(n)}
\left(\prod_{d\in\Pi_i(n)}q^-_{n/2d}(x)\right)^{(-1)^i}.
\label{eq:psi_q-}
\end{align}
In particular, if $n=2^j p^\ell$ for an odd prime $p$, then we have
\begin{equation*}
\psi_{m/n}(x)=\psi_{1/n}(x)=\psi_{2n}(x)=\begin{dcases}
q^-_{n/2}(x)&\text{if }\ell=0,\\
\left. q^-_{n/2}(x)\middle/ q^-_{n/2p}(x)\right.&\text{if }\ell >0.
\end{dcases}
\end{equation*}
\end{theorem}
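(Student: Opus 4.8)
The plan is to treat all three cases by the same four–move strategy: first reduce each minimal polynomial $\psi_{m/n}(x)$ to a standard $\psi_N(x)$ by Galois conjugacy; then read off the roots of the relevant $p$– or $q$–polynomial explicitly; then group those roots into complete conjugacy classes to obtain the recursive products \eqref{eq:p-_psi}, \eqref{eq:p+_psi}, \eqref{eq:q-_psi}; and finally invert these products by multiplicative M\"obius inversion to reach \eqref{eq:psi_p-}, \eqref{eq:psi_p+}, \eqref{eq:psi_q-}. For the opening reductions I would invoke Theorem \ref{thm:WZ_lem}, which exhibits the Galois conjugates of $2\cos (2\pi /N)$ as the numbers $2\cos (2\pi k/N)$ with $0<k<N/2$ and $\gcd (k,N)=1$. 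Writing $2\cos (m\pi /n)=2\cos\bigl(2\pi\, m/(2n)\bigr)$, a short parity check shows $\gcd (m,2n)=1$ whenever $m$ is odd (cases $(\mathrm{i})$ and $(\mathrm{iii})$), so that $2\cos (m\pi /n)$ is conjugate to $2\cos (2\pi /(2n))$ and hence $\psi_{m/n}=\psi_{1/n}=\psi_{2n}$; and when $m$ is even with $n$ odd (case $(\mathrm{ii})$), $2\cos (m\pi /n)=2\cos\bigl(2\pi\, (m/2)/n\bigr)$ with $\gcd (m/2,n)=1$, giving $\psi_{m/n}=\psi_{2/n}=\psi_n$.

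Next, using $t_n(2\cos\theta )=2\cos n\theta$ from \eqref{eq:t}, I would compute $t_n(2\cos\theta )+2=4\cos^2(n\theta /2)$ and $t_n(2\cos\theta )-2=-4\sin^2(n\theta /2)$, so that the roots of $t_n(x)+2$ are exactly the numbers $2\cos (\ell\pi /n)$ with $\ell$ odd, and those of $t_n(x)-2$ are the $2\cos (\ell\pi /n)$ with $\ell$ even. A degree count then sharpens the divisibilities of Theorem \ref{thm:factor_t} into exact factorizations: for odd $n=2s+1$ one gets $t_n(x)+2=(x+2)\,p^-_s(x)^2$ and $t_n(x)-2=(x-2)\,p^+_s(x)^2$, while for even $n$ neither $\pm 2$ is a root and $t_n(x)+2=q^-_{n/2}(x)^2$ is a perfect square. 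Reading off the square factor identifies
\[
p^-_{\floor{n/2}}(x)=\prod_{\substack{0<\ell<n\\ \ell\text{ odd}}}\bigl(x-2\cos (\ell\pi /n)\bigr),
\]
and likewise $p^+_{\floor{n/2}}(x)$ (even $\ell$) and $q^-_{n/2}(x)$ (odd $\ell$, with $n$ even).

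I would then sort these linear factors by $d:=\gcd (\ell ,2n)$, which for odd $\ell$ is the odd divisor $\gcd (\ell ,n)$ of $n$. The class $\{\ell :\gcd (\ell ,2n)=d\}$ contributes precisely the conjugates constituting $\psi_{2n/d}(x)$, yielding $p^-_{\floor{n/2}}(x)=\prod_{\divides{d}{n},\,d<n}\psi_{2n/d}(x)$ in case $(\mathrm{i})$, $p^+_{\floor{n/2}}(x)=\prod_{\divides{d}{n},\,d<n}\psi_{n/d}(x)$ in case $(\mathrm{ii})$, and $q^-_{n/2}(x)=\prod_{\divides{d}{n'}}\psi_{2n/d}(x)$ in case $(\mathrm{iii})$; since $n$ (resp. $n'$) is odd, the divisors $d>1$ occurring here are automatically $>2$, matching the ranges displayed in the theorem. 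The delicate endpoints are $d=n$ in cases $(\mathrm{i})$ and $(\mathrm{ii})$ — which corresponds exactly to the stripped linear factor $x+2$ or $x-2$ and so must be \emph{omitted} — and $d=n'$ in case $(\mathrm{iii})$, which contributes the $2$–power factor $\psi_{2^{j+1}}$ and so must be \emph{retained}. Keeping these boundary contributions straight, so that the index sets come out exactly right, is where I expect the main bookkeeping difficulty to lie.

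Finally, I would absorb the missing linear factor as the $d=n$ term, so that $(x+2)\,p^-_{\floor{n/2}}(x)=\prod_{\divides{e}{n}}\psi_{2e}(x)$ in case $(\mathrm{i})$, and analogously $(x-2)\,p^+_{\floor{n/2}}(x)=\prod_{\divides{e}{n}}\psi_{e}(x)$ in case $(\mathrm{ii})$, while case $(\mathrm{iii})$ already reads $q^-_{n/2}(x)=\prod_{\divides{e}{n'}}\psi_{2^{j+1}e}(x)$ with no linear factor. Each is a divisor–indexed product $F(n)=\prod_{\divides{e}{n}}\Phi (e)$, to which I apply multiplicative M\"obius inversion $\Phi (n)=\prod_{\divides{e}{n}}F(e)^{\mu (n/e)}$. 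Only squarefree $e$ survive, and since $n$ (resp. $n'$) is odd these are products of distinct odd primes; grouping the exponents $\mu =(-1)^i$ by the number $i$ of such primes is exactly how $\Pi_i(n)$ packages them. The spurious factors $x\pm 2$ cancel because $\sum_{\divides{e}{n}}\mu (e)=0$ for $n>1$, leaving precisely \eqref{eq:psi_p-}, \eqref{eq:psi_p+}, \eqref{eq:psi_q-}. The prime–power statements are then immediate specializations: for $n=p^\ell$ one has $\Pi_1(n)=\{p\}$ when $\ell >1$ and $\Pi_1(n)=\varnothing$ when $\ell =1$ (as then $p=n\not< n$), with every $\Pi_i$, $i\geqslant 2$, empty; the case $(\mathrm{iii})$ specialization $n=2^jp^\ell$ is identical with $n'=p^\ell$.
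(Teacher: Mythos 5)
Your proposal is correct, and it splits naturally into a half that coincides with the paper's argument and a half that takes a genuinely different route. The agreement: like you, the paper rests everything on the explicit root factorizations of $p^\pm_s(x)$ and $q^-_s(x)$ (its Corollary \ref{cor:factor_pq}; the paper reaches it from Theorem \ref{thm:factor_t}, proved algebraically via \eqref{eq:prod_c}, where you re-derive the same factorizations trigonometrically from $t_n(2\cos\theta)\pm 2$ plus a degree count --- both work), and it also proves the divisor products \eqref{eq:p-_psi}, \eqref{eq:p+_psi}, \eqref{eq:q-_psi} by attaching each root $2\cos (\ell\pi /n)$ to the divisor $d=\gcd (\ell ,n)$; where you argue that these gcd classes exactly partition the index set, the paper instead combines the inclusion of each factor with a degree count $\sum_{\divides{d}{n}}\phi (\cdot )/2$ and monicity to force equality. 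The divergence is in the passage to \eqref{eq:psi_p-}, \eqref{eq:psi_p+}, \eqref{eq:psi_q-}: you complete the product by absorbing $x\pm 2$ as the $d=n$ term and then apply multiplicative M\"obius inversion, with the spurious linear factors dying by $\sum_{\divides{d}{n}}\mu (d)=0$; the paper never inverts anything, but writes $\psi_{2n}(x)=p^-_{\floor{n/2}}(x)/\pi^\Sigma_{\#\Sigma}(n,x)$ with $\Sigma =\bigcup_{p\in\Pi_1(n)}p\,\Sigma_\odd (n/p)$ (Proposition \ref{prop:Sigma_cp}) and evaluates $\pi^\Sigma_{\#\Sigma}(n,x)$ by a direct inclusion--exclusion count of the multiplicity of each root, via the alternating binomial sum $\binom{i}{1}-\binom{i}{2}+\dots =1$. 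The combinatorial content is identical, but the emphasis differs: your version is shorter, makes the link to Barnes' M\"obius-exponent formulas (Theorem \ref{thm:Barnes}) transparent, and handles the boundary factors cleanly, while the paper's version stays at the level of finite index sets, in keeping with its stated aim of avoiding cyclotomic/M\"obius machinery. You also make explicit the conjugacy reduction $\psi_{m/n}=\psi_{1/n}=\psi_{2n}$ (resp. $\psi_{2/n}=\psi_n$), which the paper leaves implicit in Theorem \ref{thm:WZ_lem}.

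Two small points to tidy in your write-up, neither a real gap. First, with the inversion written as $\Phi (n)=\prod_{\divides{e}{n}}F(e)^{\mu (n/e)}$, the surviving terms are those with $n/e$ squarefree, not $e$ squarefree; it is the cofactor $d=n/e$ that runs over $\Pi_i(n)$, with exponent $\mu (d)=(-1)^i$ and factor $p^\mp_{\floor{n/2d}}(x)$. Second, when $n$ itself is odd and squarefree, the inversion produces the additional term $d=n$, which the sets $\Pi_i(n)$ exclude through the condition $d<n$; this mismatch is harmless precisely because the corresponding factor is $p^\mp_{\floor{n/2n}}(x)=p^\mp_0(x)=1$, a one-line remark that extends your prime-power observation $p=n\not<n$ to the general case (in case $(\mathrm{iii})$ no such issue arises, since every $d\in\Pi_i(n)$ satisfies $d\leqslant n'<n$ automatically).
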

We shall give an elementary proof of the above theorem without cyclotomic polynomials in Section $\ref{sec:proof}$
using some identities among $\{ c_n(x)\}$, $\{ p^\pm (x)\}$ and $\{ q^\pm (x)\}$.
\begin{remark}\label{rem:main}
$(1)$ The polynomial sequences $\{ c_n(x)\}$, $\{ p^+_n(x)\}$ and $\{ p^-_n(x)\}$ were introduced by
Barnes \cite{Bar77} as $\{ S_n(x)\}$, $\{ f_n(x)\}$ and $\{ g_n(x)\}$ respectively.
Also, it must be noted in relation to Theorem $\ref{thm:main}$
that Barnes proved the following result using cyclotomic polynomials.
\begin{theorem}[\cite{Bar77}, Theorem $11$]\label{thm:Barnes}
The minimal polynomial $\psi_n(x)$ of $2\cos (2\pi /n)$ is expressed as follows:

\noindent$\phantom{ii}(\mathrm{i})$ If $n=2s+1$ is odd, then
$\displaystyle\psi_n(x)=\prod_{d>1,\,\divides{d}{n}}f_{\floor{d/2}}(x)^{\mu (n/d)}$;

\noindent$\phantom{i}(\mathrm{ii})$ If $n=2s+1$ is odd, then
$\displaystyle\psi_{2n}(x)=\prod_{d>1,\,\divides{d}{n}}g_{\floor{d/2}}(x)^{\mu (n/d)}$; and

\noindent$(\mathrm{iii})$ If $n=2s$ is even, then
$\displaystyle\psi_{2n}(x)=\prod_{d>1,\,\divides{d}{n}}S_{d-1}(x)^{\mu (s/d)}$,

\noindent where $f_n(x)=p^+_n(x)$, $g_n(x)=p^-_n(x)$ and $S_n(x)=c_n(x)$,
and $\mu(n)$ denotes the M\"{o}bius function.
\end{theorem}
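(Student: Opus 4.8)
The plan is to bypass cyclotomic polynomials altogether and to deduce all three identities from the product formula of Theorem~\ref{thm:WZ} by M\"{o}bius inversion, once the polynomials $f_n=p^+_n$, $g_n=p^-_n$ and $S_n=c_n$ have been identified with explicit products of the linear factors $x-2\cos(2k\pi/N)$. The starting data are $\psi_1(x)=x-2$ and $\psi_2(x)=x+2$, from $2\cos 2\pi=2$ and $2\cos\pi=-2$, together with the classical evaluation $c_n(2\cos\theta)=\sin((n+1)\theta)/\sin\theta$ forced by the recurrence \eqref{eq:c_rec}, from which the sum-to-product form of \eqref{eq:pq_c} gives $p^+_n(2\cos\theta)=\sin\big((2n+1)\theta/2\big)/\sin(\theta/2)$ and $p^-_n(2\cos\theta)=\cos\big((2n+1)\theta/2\big)/\cos(\theta/2)$.

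First I would read off, from Theorem~\ref{thm:WZ} and these evaluations, three product identities for complete divisor sums. For odd $N=2s+1$ the factor $k=0$ of $\prod_{k=0}^{s}\big(x-2\cos(2k\pi/N)\big)$ is $x-2$ and the remaining $s$ factors are exactly the roots of $p^+_s$, so $\prod_{d\mid N}\psi_d(x)=(x-2)\,p^+_s(x)$; applying the same splitting to $\prod_{d\mid 2N}\psi_d=\prod_{k=0}^{N}\big(x-2\cos(k\pi/N)\big)$ and separating even from odd $k$---the even part reproducing $\prod_{d\mid N}\psi_d$---isolates $\prod_{d\mid N}\psi_{2d}(x)=(x+2)\,p^-_s(x)$. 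For even $N=2s$ the two endpoints $k=0,s$ contribute $x\mp2$ and the interior factors are the roots of $c_{s-1}$, giving $\prod_{d\mid N}\psi_d(x)=(x^2-4)\,c_{s-1}(x)$. The divisibility statements collected in Theorem~\ref{thm:factor_t} are what guarantee that these assembled factors are genuine and that no spurious ones appear.

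Next I would invert. Writing $C(N)=\prod_{d\mid N}\psi_d$, multiplicative M\"{o}bius inversion gives $\psi_N=\prod_{d\mid N}C(d)^{\mu(N/d)}$. For~(i) substitute $C(d)=(x-2)\,p^+_{\floor{d/2}}$; for~(ii) invert instead the function $d\mapsto\psi_{2d}$ against $\prod_{d\mid N}\psi_{2d}=(x+2)\,p^-_{\floor{d/2}}$. In both cases the prefactor $x\mp2$ is raised to the power $\sum_{d\mid N}\mu(N/d)=0$ (valid since $N>1$) and so cancels, while $p^\pm_0=1$ kills the term $d=1$; what survives is precisely the product over $d>1$ claimed in~(i) and~(ii), with $f=p^+$ and $g=p^-$.

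The delicate case, and the one I expect to be the main obstacle, is~(iii), where $N=2n$ with $n$ even so that $4\mid N$ and the power of $2$ must be tracked carefully. Here one inverts $C(N)=(x^2-4)\,c_{N/2-1}$, but one must verify that only the \emph{even} divisors $d\mid N$ contribute: for every odd $d\mid N$ one has $4\mid N/d$, hence $\mu(N/d)=0$, so those terms drop out, and the same observation makes the exponent of the prefactor $x^2-4$ vanish. Writing $d=2d'$ with $d'\mid n$ then turns $c_{N/2-1}$ into $c_{d'-1}=S_{d'-1}$ and $\mu(N/d)$ into $\mu(n/d')$, so that $\psi_{2n}=\prod_{d>1,\,d\mid n}S_{d-1}^{\mu(n/d)}$ after discarding the trivial term $S_0=1$; I note that the exponent here is $\mu(n/d)$, and that this expression agrees with Theorem~\ref{thm:main}(iii) through the Chebyshev identity $c_{2m-1}/c_{m-1}=t_m=q^-_m$.
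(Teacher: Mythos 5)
Your proposal is correct, but the comparison here is asymmetric: the paper does not prove Theorem~\ref{thm:Barnes} at all --- it quotes it from Barnes \cite{Bar77}, who proved it via cyclotomic polynomials, and the paper's own root-set machinery ($\Sigma_\odd$, $\Sigma_\even$, $\Sigma^{1/2}_\cop$, inclusion--exclusion over $\Pi_i(n)$) is deployed only for the closely related Theorem~\ref{thm:main}. Your route is therefore genuinely different from both: you establish the complete-divisor products $\prod_{\divides{d}{N}}\psi_d(x)=(x-2)\,p^+_s(x)$ and $\prod_{\divides{d}{N}}\psi_{2d}(x)=(x+2)\,p^-_s(x)$ for odd $N=2s+1$, and $\prod_{\divides{d}{N}}\psi_d(x)=(x^2-4)\,c_{s-1}(x)$ for even $N=2s$, and then apply multiplicative M\"{o}bius inversion in $\mathbb{Q}(x)^\times$. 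I checked the supporting steps: the even/odd splitting of $\prod_{k=0}^{N}\left(x-2\cos (k\pi /N)\right)$ is exactly right; the even identity is the case $m=s-1$, $n=0$ of $\eqref{eq:prod_c}$; the prefactors $x\mp 2$ and $x^2-4$ cancel because $\sum_{\divides{d}{N}}\mu (N/d)=0$ for $N>1$; and in (iii) your observation that $\mu (N/d)=0$ for odd $d$ (since then $\divides{4}{(N/d)}$) is precisely what collapses the inversion over $\divides{d}{4s}$ to a product over even divisors, with $\mu (2n/2d')=\mu (n/d')$. What your approach buys is an elementary, cyclotomic-free proof of Barnes' theorem itself, in the same spirit as, but technically distinct from, the paper's proof of Theorem~\ref{thm:main} (M\"{o}bius inversion of the Watkins--Zeitlin products, rather than explicit bookkeeping of root sets).

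One point should be stated head-on rather than parenthetically: in (iii) you prove the exponent $\mu (n/d)$, while the statement as printed has $\mu (s/d)$ with $n=2s$. The printed exponent cannot be literally correct: $s/d$ is not even an integer for, e.g., $d=2$, yet the paper's own example computes $\psi_{60}(x)$ (so $n=30$, $s=15$) as $c_{29}(x)\, c_4(x)\, c_2(x)\, c_1(x)/(c_{14}(x)\, c_9(x)\, c_5(x))$, which contains the factor $c_1(x)$ coming from $d=2$ and matches $\prod_{d>1,\,\divides{d}{n}}c_{d-1}(x)^{\mu (n/d)}$ term by term. So your version is the intended statement and $\mu (s/d)$ is a typo; say explicitly that you are proving the corrected form. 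The remaining blemishes are cosmetic and harmless: the index slip in ``$(x+2)\,p^-_{\floor{d/2}}$'' (you mean inverting the divisor-indexed function $d\mapsto\psi_{2d}$ against $\prod_{\divides{d}{N}}\psi_{2d}(x)=(x+2)\,p^-_{\floor{N/2}}(x)$), and the appeal to Theorem~\ref{thm:factor_t} for ``no spurious factors,'' which is more directly Corollary~\ref{cor:factor_pq}.
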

The expressions of $\psi_n(x)$ in Theorem $\ref{thm:Barnes}$, $(\mathrm{i})$ and $(\mathrm{ii})$
are respectively the same as $\eqref{eq:psi_p+}$ and $\eqref{eq:psi_p-}$ in Theorem $\ref{thm:main}$,
according to the definition of the M\"{o}bius function.
Meanwhile, the expression of
$\psi_{2n}(x)$ for $n=2s$ in Theorem $\ref{thm:Barnes}$, $(\mathrm{iii})$ is different from $\eqref{eq:psi_q-}$,
and needs more terms including $S_{n-1}(x)=c_{n-1}(x)$ (see also remark $(5)$ below).

\noindent $(2)$ According to Barnes \cite{Bar77} and K\'{e}ri \cite{Ker22},
$c_n (x)$, $p^-_n (x)$, $p^+_n (x)$ and $q^-_n(x)$ are respectively obtained by
rescaling $U_n(x)$, $V_n(x)$, $W_n(x)$ and $T_n(x)$
which are the Chebyshev polynomials of the second, third, fourth and first kinds, by
\begin{align*}
c_n(x)=U_n(x/2),\quad p^-_n(x)=V_n(x/2),\quad p^+_n(x)=W_n(x/2)\quad\text{and}\quad q^-_n(x)=2\, T_n(x/2),
\end{align*}
where $V_n(x)$ and $W_n(x)$ were introduced by Mason and Handscomb \cite{MH02}.
Thus $\eqref{eq:psi_p-}$, $\eqref{eq:psi_p+}$ and $\eqref{eq:psi_q-}$ are also regarded as
the factorization of $V_n(x/2)$, $W_n(x/2)$ and $2\, T_n(x/2)$ by the minimal polynomials
\begin{align*}
V_n(x/2)&=p^-_n(x)=\prod_{d<n,\,\divides{d}{2n+1}}\psi_{2(2n+1)/d}(x)=\prod_{d>1,\,\divides{d}{2n+1}}\psi_{2d}(x),\\
W_n(x/2)&=p^+_n(x)=\prod_{d<n,\,\divides{d}{2n+1}}\psi_{(2n+1)/d}(x)=\prod_{d>1,\,\divides{d}{2n+1}}\psi_d(x),\quad\text{and}\\
2\,T_n(x/2)&=t_n(x)=q^-_n(x)=\prod_{\divides{d}{n},\, d:\,\textrm{odd}}\psi_{4n/d}(x)
=\prod_{\divides{d}{n},\, n/d:\,\textrm{odd}}\psi_{4d}(x)
\end{align*}
(see e.g. \cite{Yam13}, Proposition $2.4$, and \cite{Ker22}, equations $(54)$ and $(55)$).
Hence as a corollary, we have the following irreducibility conditions for $T_n(x)$, $V_n(x)$ and $W_n(x)$.
\begin{corollary}\label{cor:irred_TVW}
$\phantom{i}(\mathrm{i})$ {\rm (\cite{RTW05}, Corollary }$3$, $(1)${\rm )}
The Chebyshev polynomial $T_n(x)$ of the first kind is irreducible if and only if $n$ is a power of $2$.

\noindent$(\mathrm{ii})$ The Chebyshev polynomials $V_n(x)$ and $W_n(x)$ of the third and fourth kinds
are irreducible if and only if $2n+1$ is a prime.
\end{corollary}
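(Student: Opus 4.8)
The plan is to deduce the corollary directly from the factorizations of $p^-_n(x)$, $p^+_n(x)$ and $t_n(x)=q^-_n(x)$ into minimal polynomials recorded in Remark~\ref{rem:main}, $(2)$. First I would observe that irreducibility over $\mathbb{Q}$ is unaffected by the affine rescaling relating the Chebyshev polynomials to our sequences: since $p^-_n(x)=V_n(x/2)$, $p^+_n(x)=W_n(x/2)$ and $t_n(x)=2\,T_n(x/2)$, and $x\mapsto x/2$ is an invertible linear change of variable (the constant factor $2$ in the last case being a unit in $\mathbb{Q}$), each of $V_n$, $W_n$, $T_n$ is irreducible over $\mathbb{Q}$ if and only if $p^-_n$, $p^+_n$, $t_n$ is, respectively. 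Thus it suffices to decide irreducibility of the monic integer polynomials $p^-_n(x)$, $p^+_n(x)$ and $t_n(x)$.

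Next I would use that each $\psi_k(x)$ occurring in these factorizations is a minimal polynomial, hence irreducible over $\mathbb{Q}$, and has positive degree for every index that actually appears (the smallest indices arising are $\psi_6$, $\psi_3$ and $\psi_4$, each of degree $1$; in general $\deg\psi_k=\phi (k)/2\geqslant 1$ for the values of $k$ in question). Consequently a product of the form $\prod_i\psi_{k_i}(x)$ is irreducible precisely when it consists of a single factor, and is reducible as soon as two or more factors occur. So the entire corollary reduces to counting the number of factors in each product.

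For the third and fourth kinds I would count, using
\[
p^-_n(x)=\prod_{d>1,\,\divides{d}{2n+1}}\psi_{2d}(x),\qquad
p^+_n(x)=\prod_{d>1,\,\divides{d}{2n+1}}\psi_d(x),
\]
the divisors $d>1$ of the odd number $2n+1$. There is exactly one such divisor if and only if $2n+1$ is prime, and there are at least two as soon as $2n+1$ is composite; this yields part $(\mathrm{ii})$ for both $V_n$ and $W_n$ simultaneously. For the first kind I would instead count, from
\[
t_n(x)=\prod_{\divides{d}{n},\ n/d\,\mathrm{odd}}\psi_{4d}(x),
\]
the divisors $d\mid n$ with $n/d$ odd. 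Writing $n=2^a m$ with $m$ odd, the condition that $n/d$ be odd forces $2^a\mid d$, so $d=2^a e$ with $e\mid m$, and the number of factors equals the number of divisors of $m$. This equals $1$ if and only if $m=1$, i.e.\ $n$ is a power of $2$, which recovers part $(\mathrm{i})$ (\cite{RTW05}, Corollary~$3$, $(1)$).

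The argument is essentially a counting argument once the factorizations are in hand, so there is no substantial obstacle; the only points demanding care are the bookkeeping of the divisor count under the parity constraint $n/d$ odd in the case of $T_n$, and the verification that the affine change of variable genuinely preserves irreducibility, so that no spurious rational factors are introduced or lost. Both are routine.
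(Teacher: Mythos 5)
Your proposal is correct and follows essentially the same route as the paper: the paper deduces the corollary directly from the factorizations of $p^-_n(x)=V_n(x/2)$, $p^+_n(x)=W_n(x/2)$ and $t_n(x)=2\,T_n(x/2)$ into irreducible minimal polynomials $\psi_k(x)$, so that irreducibility holds precisely when the product has a single factor, which is exactly your divisor-counting argument (cf.\ also Remark~\ref{rem:main}, $(5)$, where the number of factors is counted). Your explicit verification that the rescaling $x\mapsto x/2$ and the unit factor $2$ preserve irreducibility over $\mathbb{Q}$ is a detail the paper leaves implicit, but it is the same argument.
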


\noindent $(3)$ In the right-hand sides of $\eqref{eq:p-_psi}$ and $\eqref{eq:p+_psi}$,
we can replace $n/d$ in the subscripts of $\psi$ with $d$,
but cannot in that of $\eqref{eq:q-_psi}$.
Also note that the product in the right-hand side of $\eqref{eq:q-_psi}$ includes the case $d=n'$,
while those of $\eqref{eq:p-_psi}$ and $\eqref{eq:p+_psi}$ do not include the case $d=n$.

\noindent$(4)$ If $n$ is odd, then $\psi_{2n}(x)$ is equal to $\psi_n(-x)$ up to sign.
This is because $\psi_{2n}(x)$ is obtained by replacing all $p^+$'s with $p^-$'s
in the expression of $\psi_n(x)$, which satisfy $p^+_s(-x)=(-1)^sp^-_s(x)$ (see Corollary $\ref{cor:factor_pq}$).

\noindent$(5)$ Let $n=2^jp_1^{\ell_1}\cdots p_i^{\ell_i}$ be the prime factorization of $n>2$.
Also, we set $\nu =1$ if $\ell_1=\dots =\ell_i=1$ and $\nu =0$ otherwise.
Then we see that
\begin{equation*}
\psi_n(x)\text{ is expressed by}
\begin{dcases}
(2^i-\nu )\text{ terms of }\{ p^+_k(x)\}&\text{if }n\equiv 1\Mod{2},\text{ or }j=0,\\
(2^i-\nu )\text{ terms of }\{ p^-_k(x)\}&\text{if }n\equiv 2\Mod{4},\text{ or }j=1,\\
2^i\text{ terms of }\{ q^-_k(x)\}&\text{if }n\equiv 0\Mod{4},\text{ or }j>1.
\end{dcases}
\end{equation*}
which also implies Corollary $\ref{cor:irred_TVW}$.
Meanwhile, according to Theorem $\ref{thm:Barnes}$, $(\mathrm{iii})$ due to Barnes,
$\psi_n(x)$ for $n\equiv 0\Mod{4}$ is expressed by
$j(\ell_1+1)\cdots (\ell_i+1)-1$ terms of $\{ c_k(x)\}$.

\noindent$(6)$ When $n=p$ for a prime $p>2$, $\psi_n(x)$ was expanded by Surowski and McCombs in \cite{SMC03}, Theorem $3.1$,
and Beslin and de Angelis in \cite{BdA04}, p. $146$ (see also the comment after Corollary $2.2$ in \cite{LW11}).
This is also immediate from Theorem $\ref{thm:Barnes}$, $(\mathrm{i})$ due to Barnes.
Also, when $n=p^\ell$ for a prime $p>1$, $\psi_n(x)$ was expressed as a sum of Chebyshev polynomials of the first kind
by Lang in \cite{Lang11}, Proposition.
Our theorem simplifies and generalizes these results.
\end{remark}
\begin{example}
To see how Theorem $\ref{thm:main}$ works, let us calculate the minimal polynomial $\psi_{60}(x)$ of $2\cos (\pi /30)$.
Then we can use $\eqref{eq:psi_q-}$ in Theorem $\ref{thm:main}$, $(\mathrm{iii})$ for $n=30$, $m=1$.
We see that an odd divisor $d$ of $30$ with $2<d<30$ is either $3$, $5$ or $15=3\cdot 5$.
Thus from Definition $\ref{def:Pi}$ we have $\Pi_1(30)=\{ 3,5\}$, $\Pi_2(30)=\{ 15\}$ and $\Pi_i(30)=\emptyset$ for $i>2$.
Consequently, we can calculate $\psi_{60}(x)$ as
\begin{equation}\label{eq:psi_60}
\begin{aligned}
\psi_{60}(x)&=q^-_{30/2}(x)\cdot\frac{\prod_{d\in\Pi_2(30)}q^-_{30/2d}(x)}{\prod_{d\in\Pi_1(30)}q^-_{30/2d}(x)}
=\frac{q^-_{15}(x)\, q^-_1(x)}{q^-_5(x)\, q^-_3(x)}\\
&=\frac{(c_{15}(x)-c_{13}(x))\, (c_1(x)-c_{-1}(x))}{(c_5(x)-c_3(x))\, (c_3(x)-c_1(x))}\\
&=x^8-7x^6+14x^4-8x^2+1,
\end{aligned}
\end{equation}
where we used $\eqref{eq:pq_c}$ and
\begin{align*}
c_{-1}(x)&=0,\quad c_1(x)=x,\quad c_3(x)=x^3-2x,\quad c_5(x)=x^5-4x^3+3x,\\
c_{13}(x)&=x^{13}-12x^{11}+55x^9-120x^7+126x^5-56x^3+7x,\\
c_{15}(x)&=x^{15}-14x^{13}+78x^{11}-220x^9+330x^7-252x^5+84x^3-8x,
\end{align*}
which are obtained by either $\eqref{eq:c_rec}$ or $\eqref{eq:expand_c_1}$.
We can also calculate $q^-_n(x)$ directly by Proposition $\ref{prop:expand_q-}$.
This improves the expression of $\psi_{60}(x)$ by Theorem $\ref{thm:Barnes}$, $(\mathrm{iii})$ due to Barnes that
\begin{equation*}
\psi_{60}(x)=\frac{c_{29}(x)\, c_4(x)\, c_2(x)\,c_1(x)}{c_{14}(x)\, c_9(x)\,c_5(x)},
\end{equation*}
which has more terms and includes a term of higher degree than $\eqref{eq:psi_60}$.
Also, we can express $\psi_{60}(x)$ using $\eqref{eq:q-_psi}$ as
\begin{equation*}
\psi_{60}(x)=\frac{q^-_{15}(x)}{\psi_4(x)\,\psi_{12}(x)\,\psi_{20}(x)}
=\frac{t_{15}(x)}{\psi_4(x)\,\psi_{12}(x)\,\psi_{20}(x)},
\end{equation*}
which simplifies the expression using Theorem $\ref{thm:WZ}$ due to Watkins and Zeitlin that
\begin{equation*}
\psi_{60}(x)=\frac{t_{31}(x)-t_{29}(x)}
{\psi_1(x)\,\psi_2(x)\,\psi_3(x)\,\psi_4(x)\,\psi_5(x)\,\psi_6(x)\,
\psi_{10}(x)\,\psi_{12}(x)\,\psi_{15}(x)\,\psi_{20}(x)\,\psi_{30}(x)}.
\end{equation*}
\end{example}
To illustrate the idea behind Theorem $\ref{thm:main}$, let us observe the above calculation of $\psi_{60}(x)$ more closely.
Let $\Sigma_\odd (n)=\{ 0<k<n:\,\text{odd}\}$ and $\Sigma_\cop (n)=\{ 0<k<n:\,\text{coprime to }n\}$.
Then we want to calculate
\begin{equation*}
\psi_{60}(x)=\prod_{k\in\Sigma_\cop (30)}\left( x-2\cos\frac{k}{30}\pi\right)
\end{equation*}
(see also Proposition $\ref{prop:Sigma_cp}$, $(\mathrm{ii})$).
Meanwhile, as will turn out in Corollary $\ref{cor:factor_pq}$, if $n=2s$ is even, then we have
\begin{equation}\label{eq:factor_q-}
q^-_s(x)=\prod_{k=1}^{s}\left( x-2\cos\frac{2k-1}{2s}\pi\right)
=\prod_{k\in\Sigma_\odd (n)}\left( x-2\cos\frac{k}{n}\pi\right) .
\end{equation}
Thus taking $n=30$ in $\eqref{eq:factor_q-}$, so that $s=15$, we see
from $\Sigma_\cop (30)\subset\Sigma_\odd (30)$ that $\psi_{60}(x)$ divides $q^-_{15}(x)$.
More specifically, we have
\begin{align*}
\Sigma_\cop (30)&=\Sigma_\odd (30)\setminus\{ 3,5,9,15,21,25,27\}\\
&=\Sigma_\odd (30)\setminus 3\,\Sigma_\odd (30/3)\cup 5\,\Sigma_\odd (30/5),
\end{align*}
where $3$ and $5$ appear as distinct odd prime divisors of $n=30$.
Consequently, noting that $3\,\Sigma_\odd (30/3)\cap 5\,\Sigma_\odd (30/5)=\{ 15\} =15\,\Sigma_\odd (30/15)$
and using $\eqref{eq:factor_q-}$ again,
we obtain the desired expression $\eqref{eq:psi_60}$ of $\psi_{60}(x)$.

The minimal polynomials $\psi_n(x)$ for $n\leqslant 120$ are given in Appendix A
in terms of $\{ p^\pm_k(x)\}$ and $\{ q^-_k(x)\}$, all of which come from $\{ c_k(x)\}$.
\section{Properties of the polynomials $c_n(x)$, $p^\pm_n(x)$ and $q^\pm_n(x)$}\label{sec:props}
Recall that the Chebyshev polynomial sequence $\{ T_n(x)\}$ satisfies
\begin{equation*}
T_0(x)=1,\quad T_1(x)=x,\quad\text{and}\quad T_n(x)=2x\cdot T_{n-1}(x)-T_{n-2}(x)\quad\text{for }n\geqslant 2.
\end{equation*}
Accordingly, the rescaled Chebyshev polynomial sequence $\{ t_n(x)\}$ defined in $\eqref{eq:t}$ satisfies
\begin{equation}\label{eq:t_rec}
t_0(x)=2,\quad t_1(x)=x,\quad\text{and}\quad t_n(x)=x\cdot t_{n-1}(x)-t_{n-2}(x)\quad\text{for }n\geqslant 2,
\end{equation}
so that each $t_n(x)$ is a monic polynomial with integer coefficients.
We can also express $t_n(x)$ as
\begin{equation}\label{eq:t_lambda}
t_n(x)=\lambda_+^n+\lambda_-^n,
\end{equation}
where $\lambda_\pm$ are two solutions of the characteristic equation $\lambda^2-x\lambda+1=0$ of
the recurrence relation for $\{ t_n(x)\}$, given by
\begin{equation}\label{eq:lambda}
\lambda_\pm =\frac{x\pm\sqrt{x^2-4}}{2},\quad\text{which satisfy}\quad
\lambda_++\lambda_-=x\quad\text{and}\quad\lambda_+\lambda_-=1
\end{equation}
(see \cite{Riv90}, p. $5$, Exercise $1.1.1$).

Since we see from the definition $\eqref{eq:c_rec}$ of $\{ c_n(x)\}$
that the recurrence relation for $\{ c_n(x)\}$ is the same as $\{ t_n(x)\}$,
we can express $c_n(x)$ in terms of $\lambda_\pm$ as
\begin{equation}\label{eq:c_lambda}
c_n(x)=\frac{\lambda_+^{n+1}-\lambda_-^{n+1}}{\sqrt{x^2-4}}.
\end{equation}
Also, $\{ p^\pm_n(x)\}$ and $\{ q^\pm_n(x)\}$ satisfy the same recurrence relation as $\{ t_n(x)\}$
due to their definition $\eqref{eq:pq_c}$.
Thus $\{ p^\pm_n(x)\}$ and $\{ q^\pm_n(x)\}$ can be alternatively defined by
\begin{alignat*}{3}
p^\pm_0 (x)&=1,&p^\pm_1 (x)&=x\pm 1,\quad&p^\pm_n (x)&=x\cdot p^\pm_{n-1} (x)-p^\pm_{n-2}(x),\\
q^\pm_0(x)&=1\mp 1,\quad&q^\pm_1(x) &=x,&q^\pm_n (x)&=x\cdot q^\pm_{n-1} (x)-q^\pm_{n-2} (x).
\end{alignat*}
In particular, we see that
\begin{equation*}
q^-_n(x)=t_n(x)\quad\text{and}\quad q^+_n(x)=x\cdot c_{n-1}(x).
\end{equation*}
\begin{proposition}
We have
\begin{equation}\label{eq:prod_c}
(x^2-4)\, c_m(x)\, c_n(x)=t_{m+n+2}(x)-t_{\norm{m-n}}(x).
\end{equation}
\end{proposition}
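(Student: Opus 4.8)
The plan is to use the closed-form expression \eqref{eq:c_lambda} for $c_n(x)$ in terms of $\lambda_\pm$, together with \eqref{eq:t_lambda} for $t_n(x)$, and simply multiply out. First I would write
\begin{equation*}
(x^2-4)\, c_m(x)\, c_n(x)
=(x^2-4)\cdot\frac{\lambda_+^{m+1}-\lambda_-^{m+1}}{\sqrt{x^2-4}}\cdot\frac{\lambda_+^{n+1}-\lambda_-^{n+1}}{\sqrt{x^2-4}}
=\left(\lambda_+^{m+1}-\lambda_-^{m+1}\right)\left(\lambda_+^{n+1}-\lambda_-^{n+1}\right),
\end{equation*}
using that $\bigl(\sqrt{x^2-4}\,\bigr)^2=x^2-4$ cancels the factor $x^2-4$ exactly. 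This is the key simplification that makes the identity clean.

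Next I would expand the product of the two differences into four terms,
\begin{equation*}
\lambda_+^{m+n+2}+\lambda_-^{m+n+2}-\lambda_+^{m+1}\lambda_-^{n+1}-\lambda_-^{m+1}\lambda_+^{n+1}.
\end{equation*}
The first two terms are immediately recognized as $t_{m+n+2}(x)$ by \eqref{eq:t_lambda}. For the cross terms I would use the relation $\lambda_+\lambda_-=1$ from \eqref{eq:lambda} to cancel the common powers: assuming without loss of generality $m\geqslant n$, the term $\lambda_+^{m+1}\lambda_-^{n+1}=(\lambda_+\lambda_-)^{n+1}\lambda_+^{m-n}=\lambda_+^{m-n}$, and similarly $\lambda_-^{m+1}\lambda_+^{n+1}=\lambda_-^{m-n}$. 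Hence the cross terms sum to $\lambda_+^{m-n}+\lambda_-^{m-n}=t_{m-n}(x)=t_{\norm{m-n}}(x)$, again by \eqref{eq:t_lambda}, giving the desired right-hand side.

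The only point requiring a little care — and the one I would flag as the main obstacle — is the symmetry in $m$ and $n$. Since the left-hand side is manifestly symmetric, I may assume $m\geqslant n$ when simplifying the cross terms so that $m-n\geqslant 0$; the absolute value $\norm{m-n}$ in the statement is precisely what absorbs the other case $n>m$, and I would remark that $t_k(x)=t_{-k}(x)$ (equivalently $t_{\norm{k}}(x)$) follows from the even symmetry $\lambda_+^k+\lambda_-^k$ in $k$ under $\lambda_+\leftrightarrow\lambda_-$. A secondary point is to confirm that \eqref{eq:c_lambda} and \eqref{eq:t_lambda} are valid as polynomial identities even though $\sqrt{x^2-4}$ and the $\lambda_\pm$ are not polynomials; since both sides of \eqref{eq:prod_c} are genuine polynomials and agree on the open set where $x^2-4>0$ (where the formulas are literally valid), they agree identically. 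With these observations the proof is a short direct computation.
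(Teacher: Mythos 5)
Your proof is correct and is exactly the computation the paper intends: its one-line proof cites \eqref{eq:t_lambda}, \eqref{eq:lambda} and \eqref{eq:c_lambda}, and you have simply carried out that "straightforward" expansion in full, including the careful handling of $t_{\norm{m-n}}$ and the polynomial-identity justification. No gaps; your write-up is a fleshed-out version of the paper's own argument.
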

\begin{proof}
This is straightforward from $\eqref{eq:t_lambda}$, $\eqref{eq:lambda}$ and $\eqref{eq:c_lambda}$.
\end{proof}
\begin{theorem}\label{thm:factor_t}
If $n=2s+1$, then we have
\begin{equation*}
t_n(x)\pm 2=(x\pm 2)\, p^\mp_s(x)^2.
\end{equation*}
Also, if $n=2s$, then we have
\begin{align*}
t_n(x)+2&=q^-_s(x)^2=t_s(x)^2,\quad\text{and}\\
t_n(x)-2&=(x^2-4)\, q^+_s(x)^2/x^2=(x^2-4)\, c_{s-1}(x)^2.
\end{align*}
\end{theorem}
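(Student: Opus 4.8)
The plan is to work throughout with the closed forms \eqref{eq:t_lambda} and \eqref{eq:c_lambda}, using only the two relations $\lambda_+\lambda_-=1$ and $\lambda_+-\lambda_-=\sqrt{x^2-4}$ recorded in \eqref{eq:lambda}. Every quantity in the statement ($t_n$, $p^\pm_s$, $q^\pm_s$, $c_{s-1}$) is a genuine polynomial in $x$, so it suffices to verify each claimed equality as a rational-function identity in $\lambda_+$ (equivalently, for all real $x>2$, where $\lambda_\pm$ are real and distinct); the resulting polynomial identities then hold for all $x$.

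First I would dispose of the even case $n=2s$, which is essentially immediate. Since $(\lambda_+\lambda_-)^s=1$, I can write $t_{2s}(x)\pm 2=\lambda_+^{2s}+\lambda_-^{2s}\pm 2(\lambda_+\lambda_-)^s=(\lambda_+^s\pm\lambda_-^s)^2$. For the upper sign, $\lambda_+^s+\lambda_-^s=t_s(x)=q^-_s(x)$ by \eqref{eq:t_lambda}, giving $t_{2s}+2=q^-_s(x)^2=t_s(x)^2$. For the lower sign, \eqref{eq:c_lambda} gives $\lambda_+^s-\lambda_-^s=\sqrt{x^2-4}\,c_{s-1}(x)$, hence $t_{2s}-2=(x^2-4)\,c_{s-1}(x)^2$; substituting $c_{s-1}(x)=q^+_s(x)/x$ from $q^+_n(x)=x\,c_{n-1}(x)$ yields the stated form $(x^2-4)\,q^+_s(x)^2/x^2$.

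The substance is the odd case $n=2s+1$, where the one real manoeuvre is collapsing the two-term expressions $p^\pm_s=c_s\pm c_{s-1}$ into a single power of $\lambda_+$. Writing $\lambda_-=\lambda_+^{-1}$ in \eqref{eq:c_lambda} and factoring the numerator and denominator $\lambda_+-\lambda_-=(\lambda_+-1)(\lambda_++1)/\lambda_+$, I expect the clean forms $p^-_s(x)=(\lambda_+^{s+1}+\lambda_+^{-s})/(\lambda_++1)$ and $p^+_s(x)=(\lambda_+^{s+1}-\lambda_+^{-s})/(\lambda_+-1)$. In parallel, $x\pm 2=(\lambda_+\pm 1)^2/\lambda_+$ is immediate from $\lambda_++\lambda_+^{-1}=x$, so the denominators cancel: $(x\pm 2)\,p^\mp_s(x)^2=(\lambda_+^{s+1}\pm\lambda_+^{-s})^2/\lambda_+$. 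Expanding the square and dividing by $\lambda_+$ leaves $\lambda_+^{2s+1}+\lambda_+^{-2s-1}\pm 2=t_{2s+1}(x)\pm 2$, which is exactly the claim (the cross term produces the constant $\pm 2$ precisely because $\lambda_+^{s+1}\cdot\lambda_+^{-s}=\lambda_+$ divides out).

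I expect the main obstacle to be purely bookkeeping rather than conceptual: deriving the collapsed forms of $p^\pm_s$ correctly, in particular tracking the sign relations $\lambda_-^s(\lambda_-+1)=\lambda_+^{-s-1}(\lambda_++1)$ versus $\lambda_-^s(\lambda_--1)=-\lambda_+^{-s-1}(\lambda_+-1)$, and confirming that $\sqrt{x^2-4}=\lambda_+-\lambda_-$ is used with one fixed branch throughout. Once these closed forms are in hand, the theorem reduces entirely to the two one-line square expansions above, so there is no genuine difficulty beyond the algebraic simplification.
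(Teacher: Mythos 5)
Your proposal is correct; every identity you assert checks out. In the odd case, the collapsed forms $p^-_s(x)=(\lambda_+^{s+1}+\lambda_+^{-s})/(\lambda_++1)$ and $p^+_s(x)=(\lambda_+^{s+1}-\lambda_+^{-s})/(\lambda_+-1)$ do follow from \eqref{eq:c_lambda} by the factoring you describe, and together with $x\pm 2=(\lambda_+\pm 1)^2/\lambda_+$ the squares telescope to $t_{2s+1}(x)\pm 2$ exactly as claimed; the even case is indeed a one-line perfect square, and the ``verify for $x>2$, conclude as polynomial identity'' framing is sound.

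Your route is genuinely different in organization from the paper's, though both rest on the same closed forms \eqref{eq:t_lambda} and \eqref{eq:c_lambda}. The paper first proves the auxiliary product identity \eqref{eq:prod_c}, $(x^2-4)\,c_m(x)\,c_n(x)=t_{m+n+2}(x)-t_{\norm{m-n}}(x)$, then expands $(x^2-4)\,p^\pm_s(x)^2$ and $(x^2-4)\,q^\pm_s(x)^2$ term by term and collapses the results using the recurrences $t_{n+1}(x)+t_{n-1}(x)=x\,t_n(x)$ and $t_{n+2}(x)+t_{n-2}(x)=(x^2-2)\,t_n(x)$, obtaining $(x\pm 2)(t_{2s+1}(x)\mp 2)$ and $(x^2-2\pm 2)(t_{2s}(x)\mp 2)$ before dividing out $x^2-4$. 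You bypass the auxiliary identity entirely by writing each of $p^\pm_s$, $q^\pm_s$ as a single expression in $\lambda_+$, which makes the perfect-square structure visible at a glance and handles the even case in one line. What the paper's version buys in exchange is that, after the one appeal to $\lambda_\pm$ in proving \eqref{eq:prod_c}, all remaining manipulations are polynomial identities among the $t_n$ and $c_n$ themselves --- no division by $\lambda_+\pm 1$, no branch bookkeeping for $\sqrt{x^2-4}$ --- and it isolates \eqref{eq:prod_c} as a clean standalone statement. Your approach trades that for brevity and transparency; both are complete proofs.
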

\begin{proof}
If $n=2s+1$, then using $\eqref{eq:pq_c}$ and $\eqref{eq:prod_c}$ we have
\begin{align*}
(x^2-4)\, p^\pm_s (x)^2&=(x^2-4)\, (c_s(x)\pm c_{s-1}(x))^2\\
&=t_{2s+2}(x)+t_{2s}(x)-4\pm 2(t_{2s+1}(x)-x)\\
&=(x\pm 2)\, (t_{2s+1}(x)\mp 2),
\end{align*}
where we used $\eqref{eq:t_rec}$ for the last equality.
Similarly, if $n=2s$, then we have
\begin{align*}
(x^2-4)\, q^\pm_s (x)^2&=(x^2-4)\, (c_s(x)\pm c_{s-2}(x))^2\\
&=t_{2s+2}(x)+t_{2s-2}(x)-4\pm 2\{ t_{2s}(x)-(x^2-2)\}\\
&=(x^2-2\pm 2)\, (t_{2s}(x)\mp 2),
\end{align*}
where we used for the last equality
\begin{equation*}
t_{n+2}(x)-(x^2-2)\, t_n(x)+t_{n-2}(x)=0,
\end{equation*}
which is easily derived from $\eqref{eq:t_rec}$.
\end{proof}
\begin{corollary}\label{cor:factor_pq}
For $s>0$, we have
\begin{align*}
p^-_s(x)&=\prod_{k=1}^s \left(x-2\cos\frac{2k-1}{2s+1}\pi\right) ,\\
p^+_s(x)&=\prod_{k=1}^s \left(x-2\cos\frac{2k}{2s+1}\pi\right) =(-1)^sp^-_s(-x),\\
q^-_s(x)&=t_s(x)=\prod_{k=1}^s \left(x-2\cos\frac{2k-1}{2s}\pi\right) ,\quad\text{and}\quad\\
\frac{q^+_s(x)}{x}&=c_{s-1}(x)=\prod_{k=1}^{s-1}\left(x-2\cos\frac{k}{s}\pi\right) =
\begin{dcases}
p^-_{s'}(x)\, p^+_{s'}(x)&\text{if }s=2s'+1\text{ is odd},\\
q^-_{s'}(x)\, c_{s'-1}(x)&\text{if }s=2s'\text{ is even}.
\end{dcases}
\end{align*}
\end{corollary}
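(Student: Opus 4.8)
The plan is to parametrize $x=2\cos\theta$ with $\theta\in(0,\pi)$, so that $x\mapsto 2\cos\theta$ is a bijection onto $(-2,2)$ and, by \eqref{eq:t}, $t_n(x)=2\cos n\theta$. Every root of each polynomial in question is real and lies in $[-2,2]$, since for $|x|>2$ one has $|t_n(x)|>2$, so this substitution captures all roots. The key simplification is that Theorem \ref{thm:factor_t} presents each target polynomial's \emph{square} as a quotient of shifted values of $t$, and under the substitution the half-angle identities $2\cos\alpha+2=4\cos^2(\alpha/2)$ and $2\cos\alpha-2=-4\sin^2(\alpha/2)$ collapse these quotients into perfect squares of elementary trigonometric ratios.

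Concretely, for $p^-_s$ I would take $n=2s+1$ in Theorem \ref{thm:factor_t}, giving $(x+2)\,p^-_s(x)^2=t_{2s+1}(x)+2$. Substituting $x=2\cos\theta$ turns this into
\[
p^-_s(2\cos\theta)^2=\frac{4\cos^2\!\big((2s+1)\theta/2\big)}{4\cos^2(\theta/2)}
=\left(\frac{\cos\big((2s+1)\theta/2\big)}{\cos(\theta/2)}\right)^{2},
\]
valid on $(0,\pi)$ where $\cos(\theta/2)\neq 0$. Hence the zeros of $p^-_s$ are exactly the points $2\cos\theta$ with $\cos((2s+1)\theta/2)=0$, i.e. $\theta=\tfrac{(2k-1)\pi}{2s+1}$ for $k=1,\dots ,s$. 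These $s$ values of $x$ are pairwise distinct (cosine is injective on $[0,\pi]$), and $p^-_s$ is monic of degree $s$ by \eqref{eq:pq_c} and \eqref{eq:expand_c_1}; matching the zero set against the degree forces $p^-_s(x)=\prod_{k=1}^s\!\big(x-2\cos\tfrac{2k-1}{2s+1}\pi\big)$, with every root simple. The same recipe handles the other three: $t_{2s+1}(x)-2=(x-2)\,p^+_s(x)^2$ yields $p^+_s(2\cos\theta)^2=(\sin((2s+1)\theta/2)/\sin(\theta/2))^2$ with zeros $\theta=\tfrac{2k\pi}{2s+1}$; for $q^-_s=t_s$ one reads off $t_s(2\cos\theta)=2\cos s\theta$ directly from \eqref{eq:t}, with zeros $\theta=\tfrac{2k-1}{2s}\pi$; and $t_{2s}(x)-2=(x^2-4)\,c_{s-1}(x)^2$ gives $c_{s-1}(2\cos\theta)^2=(\sin s\theta/\sin\theta)^2$ with zeros $\theta=\tfrac{k}{s}\pi$, $k=1,\dots ,s-1$. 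In each case a count of distinct zeros equal to the monic degree finishes the argument.

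It remains to prove the two algebraic identities. For $p^+_s(x)=(-1)^sp^-_s(-x)$ I would use that, by \eqref{eq:expand_c_1}, $c_n$ contains only monomials of degree congruent to $n$, so $c_n(-x)=(-1)^nc_n(x)$; substituting this into $p^+_s(x)=c_s(x)+c_{s-1}(x)$ from \eqref{eq:pq_c} gives $(-1)^sp^-_s(-x)=(-1)^s\big(c_s(-x)-c_{s-1}(-x)\big)=c_s(x)+c_{s-1}(x)=p^+_s(x)$. For the case split of $c_{s-1}(x)$ I would partition the index set $\{1,\dots ,s-1\}$ of the product $\prod_{k=1}^{s-1}\big(x-2\cos\tfrac{k}{s}\pi\big)$ by the parity of $k$. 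When $s=2s'+1$ is odd, the odd indices $k=2j-1$ reproduce $p^-_{s'}(x)$ and the even indices $k=2j$ reproduce $p^+_{s'}(x)$, both with denominator $s=2s'+1$, giving $p^-_{s'}(x)p^+_{s'}(x)$; when $s=2s'$ is even, the odd indices reproduce $t_{s'}(x)=q^-_{s'}(x)$ while the even indices $k=2j$ produce the arguments $\tfrac{j}{s'}\pi$ and hence $c_{s'-1}(x)$.

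The only genuinely delicate point, which I would flag explicitly, is the passage from the squared identity to the polynomial itself. Rather than trying to fix a consistent square root of the trigonometric ratio (whose global sign on $(0,\pi)$ is awkward because the numerator changes sign at its zeros), I would extract only the zero set: from $F(\theta)^2=G(\theta)^2$ one concludes that $F$ and $G$ vanish at the same points, which is all that is needed, since a monic polynomial of degree $s$ with $s$ distinct prescribed roots is determined uniquely. This also explains why no root multiplicities intrude—the boundary points $\theta=0,\pi$, that is $x=\pm2$, are precisely the factors $x\pm2$ and $x^2-4$ already peeled off in Theorem \ref{thm:factor_t}, so only interior, simple roots survive.
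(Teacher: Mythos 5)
Your proof is correct and takes essentially the same route as the paper's: both read the product formulas off Theorem~\ref{thm:factor_t} via the substitution $x=2\cos\theta$, identify the interior roots from $\cos n\theta \pm 1=0$, and match the distinct roots so found against the monic degree (your explicit treatment of $p^+_s(x)=(-1)^sp^-_s(-x)$ by coefficient parity and of the case split by partitioning indices just fills in what the paper calls immediate). One remark: your opening claim that every root is real and lies in $[-2,2]$ is justified only for real $x$ (the bound $|t_n(x)|>2$ does not extend to complex arguments as stated), but this claim is never actually load-bearing, since exhibiting $s$ distinct roots of a monic degree-$s$ polynomial already forces the factorization, exactly as your own counting step shows.
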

\begin{proof}
This is immediate from Theorem $\ref{thm:factor_t}$ because the roots of $t_n(x)\pm 2$ consist of $x=2\cos\theta$
corresponding to two values of $\theta\in [0,2\pi )$ with $\cos (n\theta )\pm 1=0$,
so that we can take $\theta\in (0,\pi )$.
\end{proof}
We remark that Lee and Wong \cite{LW11} studied some combinatorial properties of
polynomials $A_n(x)$ defined by
\begin{equation*}
A_n(x)=2^n\prod_{k=1}^n\left( x-\cos\frac{2k}{2n+1}\pi\right) ,
\end{equation*}
which are the same as $p^+_n(2x)=W_n(x)$ due to Corollary $\ref{cor:factor_pq}$.

We end this section by proving the expansion of $c_n(x)$ given by $\eqref{eq:expand_c_1}$
together with a useful expansion of $q^-_n(x)$.
\begin{proposition}\label{prop:expand_c}
We can expand $c_n(x)$ as
\begin{equation}\label{eq:expand_c_2}
c_n(x)=\sum_{k=0}^{\floor{n/2}}(-1)^k\binom{n-k}{k}\, x^{n-2k}\quad\text{for }n\geqslant 0.
\end{equation}
\end{proposition}
\begin{proof}
Since $\eqref{eq:expand_c_2}$ gives $c_0(x)=1$ and $c_1(x)=x$,
it remains to prove that $\{ c_n(x)\}$ given by $\eqref{eq:expand_c_2}$
satisfies the recurrence relation $c_n(x)+c_{n-2}(x)=x\cdot c_{n-1}(x)$
as in $\eqref{eq:c_rec}$ for $n\geqslant 2$.
We calculate $c_n(x)+c_{n-2}(x)$ and $x\cdot c_{n-1}(x)$ as
\begin{align}
c_n(x)+c_{n-2}(x)&=x^n-\sum_{k=0}^{\floor{n/2}-1}(-1)^k\left\{\binom{n-k-1}{k+1}-\binom{n-k-2}{k}\right\}x^{n-2k-2}
\quad\text{and}\label{eq:c_n+c_n-2}\\
x\cdot c_{n-1}(x)&=x^n-\sum_{k=0}^{\floor{(n-1)/2}-1}(-1)^k\binom{n-k-2}{k+1}\, x^{n-2k-2}.\label{eq:xc_n-1}
\end{align}
If $n$ is odd, then $\floor{n/2}=\floor{(n-1)/2}$, so that $\eqref{eq:c_n+c_n-2}$ and $\eqref{eq:xc_n-1}$ are equal
because of the recurrence relation
\begin{equation}\label{eq:binom_rec}
\binom{n-k-1}{k+1}=\binom{n-k-2}{k}+\binom{n-k-2}{k+1}.
\end{equation}
If $n=2s$ is even, then due to $\eqref{eq:binom_rec}$,
$\eqref{eq:c_n+c_n-2}$ differs from $\eqref{eq:xc_n-1}$ by the constant term corresponding to $k=s-1$,
which is calculated as
\begin{equation*}
(-1)^s\left\{\binom{s}{s}-\binom{s-1}{s-1}\right\}=0,
\end{equation*}
so that $\eqref{eq:c_n+c_n-2}$ is equal to $\eqref{eq:xc_n-1}$.
This proves that $\{ c_n(x)\}$ given by $\eqref{eq:expand_c_2}$ satisfies the recurrence relation in $\eqref{eq:c_rec}$.
\end{proof}
\begin{proposition}\label{prop:expand_q-}
We can expand $q^-_n(x)=t_n(x)=2\, T_n(x/2)$ as
\begin{equation}
\begin{aligned}\label{eq:expand_q-}
q^-_n(x)&=x^n-\sum_{k=0}^{\floor{n/2}-1}(-1)^k\frac{n}{k+1}\binom{n-k-2}{k}\, x^{n-2k-2}\\
&=\sum_{k=0}^{\floor{n/2}}(-1)^k\,\frac{n\cdot (n-k-1)!}{k!\, (n-2k)!}\,x^{n-2k}\quad\text{for }n\geqslant 1.
\end{aligned}
\end{equation}
\end{proposition}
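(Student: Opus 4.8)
The plan is to sidestep the recurrence and instead exploit the defining relation $q^-_n(x)=c_n(x)-c_{n-2}(x)$ from \eqref{eq:pq_c} (recalling also that $q^-_n(x)=t_n(x)$), together with the already-established expansion \eqref{eq:expand_c_2} of $c_n(x)$. Substituting that expansion for both $c_n$ and $c_{n-2}$ turns the entire statement into a piece of binomial bookkeeping.

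Concretely, for $n\geqslant 2$ I would write
\[
c_n(x)=\sum_{k=0}^{\floor{n/2}}(-1)^k\binom{n-k}{k}\,x^{n-2k},\qquad
c_{n-2}(x)=\sum_{k=0}^{\floor{n/2}-1}(-1)^k\binom{n-k-2}{k}\,x^{n-2k-2},
\]
using $\floor{(n-2)/2}=\floor{n/2}-1$. Reindexing the second sum by $k\mapsto k-1$ realigns it onto the monomials $x^{n-2k}$ of the first, and forming $c_n-c_{n-2}$ gives
\[
q^-_n(x)=x^n+\sum_{k=1}^{\floor{n/2}}(-1)^k\left[\binom{n-k}{k}+\binom{n-k-1}{k-1}\right]x^{n-2k},
\]
where the leading $x^n$ comes solely from $c_n$. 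The whole proposition thus reduces to the single identity
\[
\binom{n-k}{k}+\binom{n-k-1}{k-1}=\frac{n\,(n-k-1)!}{k!\,(n-2k)!},
\]
which I would verify by pulling out the common factor $(n-k-1)!/(n-2k)!$ and noting that the bracket collapses to $\tfrac{n-k}{k!}+\tfrac{k}{k!}=\tfrac{n}{k!}$. This delivers the second (closed factorial) form in \eqref{eq:expand_q-}.

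Finally, the first form follows from the second by a purely cosmetic reindexing $k\mapsto k+1$ on the terms with $k\geqslant 1$, using $\dfrac{(n-k-2)!}{(k+1)!\,(n-2k-2)!}=\dfrac{1}{k+1}\binom{n-k-2}{k}$ to recognise the coefficient. The only point requiring a separate word is the edge case $n=1$, where $c_{-1}(x)=0$ from \eqref{eq:c_rec} falls outside the range of \eqref{eq:expand_c_2}; there both sums are empty and the formula degenerates correctly to $q^-_1(x)=x$, so I would dispose of it by inspection. I expect no genuine obstacle: the argument is a controlled index shift, and the only care needed is keeping the summation ranges and the parity of $\floor{\cdot}$ straight for even versus odd $n$, together with a quick check that the top index $k=\floor{n/2}$ behaves (e.g.\ for even $n=2s$ the constant terms combine to $2(-1)^s$, in agreement with the formula).
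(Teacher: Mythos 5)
Your proof is correct and takes essentially the same route as the paper: there, too, $q^-_n(x)=c_n(x)-c_{n-2}(x)$ is expanded via \eqref{eq:expand_c_2} (the step the paper performs ``similarly to \eqref{eq:c_n+c_n-2}''), the indices are shifted to align powers, the resulting pair of binomial coefficients is collapsed into the closed factorial form, and $n=1$ is checked by inspection. Incidentally, your bracket is the correct one: after your shift $k\mapsto k+1$ it reads $\binom{n-k-2}{k}+\binom{n-k-1}{k+1}=\frac{n}{k+1}\binom{n-k-2}{k}$, whereas the paper's displayed intermediate bracket $\binom{n-k}{k}+\binom{n-k-1}{k+1}$ contains a typo (wrong for $n\geqslant 4$; e.g.\ it gives $x^4-4x^2+4$ instead of $t_4(x)=x^4-4x^2+2$), though its conclusion \eqref{eq:expand_q-} is unaffected.
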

\begin{proof}
For $n=1$, $\eqref{eq:expand_q-}$ gives $q^-_1(x)=x$ correctly.
For $n\geqslant 2$, similarly to $\eqref{eq:c_n+c_n-2}$ we have
\begin{equation*}
q^-_n(x)=x^n-\sum_{k=0}^{\floor{n/2}-1}(-1)^k\left\{\binom{n-k}{k}+\binom{n-k-1}{k+1}\right\}x^{n-2k-2},
\end{equation*}
which immediately leads to $\eqref{eq:expand_q-}$.
\end{proof}
\section{Proof of the main theorem}\label{sec:proof}
Now suppose $n>2$ and
define $\Sigma (n)$, $\Sigma_\odd (n)$, and $\Sigma_\even (n)$, $\Sigma_\cop (n)$ and $\Sigma_\cop^{1/2}(n)$ by
\begin{align*}
\Sigma (n)&=\set{k\,|\, 0<k<n} ,\quad
\Sigma_\odd (n)=\{ 2k-1\in\Sigma (n)\},\quad\Sigma_\even (n)=\{ 2k\in\Sigma (n)\} ,\\
\Sigma_\cop (n)&=\set{k\in\Sigma (n)\,|\, k\text{ is coprime to }n}\quad\text{and}\quad
\Sigma_\cop^{1/2}(n)=\Sigma_\cop (n)\cap \Sigma (n/2),
\end{align*}
where we set for consistency $\Sigma (n/2)=\{ 1,2,\dots ,s\}$ if $n=2s+1$ is odd.
In particular, we have
\begin{align}
&2\,\Sigma (n/2)=\Sigma_\even (n)\quad\text{for all }n>2,\quad\text{so that}\nonumber\\
&\left.\Sigma_\even (n)\middle\backslash 2\,\Sigma^{1/2}_\cop (n)\right.
=2\, \left(\Sigma (n/2)\middle\backslash\Sigma^{1/2}_\cop (n)\right) .
\label{eq:Sigma_ev-2Sigma_cp}
\end{align}
Also, we define $\pi^\Sigma_s(n,x)$ for a subset $\Sigma$ of $\Sigma (n)$ with $\#\Sigma =s$ by
\begin{equation*}
\pi^\Sigma_s(n,x)=\prod_{m\in\Sigma}\left( x-2\cos\frac{m}{n}\pi\right) ,
\end{equation*}
where we set $\pi^\Sigma_0(n,x)=1$ if $\Sigma$ is empty.
Then it follows from Theorem $\ref{thm:WZ_lem}$ that $\psi_n(x)$ is written as
\begin{equation}\label{eq:psi_pi}
\psi_n (x)=\pi^{2\,\Sigma^{1/2}_\cop (n)}_{\phi (n)/2}(n,x).
\end{equation}
Also, we see from Corollary $\ref{cor:factor_pq}$ that if $n=2s+1$ is odd, then $p^\pm_s(x)$ are written as
\begin{equation}\label{eq:p_pi}
p^-_s(x)=\pi^{\Sigma_\odd (n)}_s(n,x)\quad\text{and}\quad p^+_s(x)=\pi^{\Sigma_\even (n)}_s(n,x),
\end{equation}
and if $n=2s$ is even, then $q^\pm_s(x)$ are written as
\begin{equation}\label{eq:q_pi}
q^-_s(x)=\pi^{\Sigma_\odd (n)}_s(n,x)\quad\text{and}\quad q^+_s(x)/x=c_{s-1}(x)=\pi^{\Sigma (n)}_{s-1}(n,x).
\end{equation}
The following two lemmas are immediate.
\begin{lemma}\label{lem:pi_Sigma}
$(\mathrm{i})$ If $\Sigma\subset\Sigma'\subset\Sigma (n)$,
then $\pi^\Sigma_s (n,x)$ divides $\pi^{\Sigma'}_{s'}(n,x)$, where $\#\Sigma =s$ and $\#\Sigma'=s'$.

\noindent$(\mathrm{ii})$ If $d$ divides $n$ and $\Sigma\subset\Sigma (n/d)$,
then we have $\pi^{d\,\Sigma}_s(n,x)=\pi^\Sigma_s(n/d,x)$, where $\#\Sigma =s$.
\end{lemma}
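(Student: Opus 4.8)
The plan is to read off both assertions directly from the product definition
\[
\pi^\Sigma_s(n,x)=\prod_{m\in\Sigma}\left( x-2\cos\frac{m}{n}\pi\right),
\]
so that each part reduces to an elementary manipulation of finite products. The one preliminary fact I would record is that the map $m\mapsto 2\cos (m\pi /n)$ is injective on $\Sigma (n)=\{ 0<m<n\}$, since $m\pi /n$ then ranges over $(0,\pi )$ where $\cos$ is strictly decreasing; hence each $\pi^\Sigma_s(n,x)$ is genuinely a product of $s=\#\Sigma$ \emph{distinct} linear factors, and the degree count $s$ is as asserted.

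For part $(\mathrm{i})$, given $\Sigma\subset\Sigma'\subset\Sigma (n)$ I would simply split the product over $\Sigma'$ along the partition $\Sigma'=\Sigma\sqcup(\Sigma'\setminus\Sigma)$:
\begin{equation*}
\pi^{\Sigma'}_{s'}(n,x)=\prod_{m\in\Sigma}\left( x-2\cos\frac{m}{n}\pi\right)\prod_{m\in\Sigma'\setminus\Sigma}\left( x-2\cos\frac{m}{n}\pi\right)
=\pi^\Sigma_s(n,x)\cdot\pi^{\Sigma'\setminus\Sigma}_{s'-s}(n,x).
\end{equation*}
This exhibits $\pi^{\Sigma'}_{s'}(n,x)$ as a polynomial multiple of $\pi^\Sigma_s(n,x)$, which is exactly the claimed divisibility.

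For part $(\mathrm{ii})$, the single point to observe is that for each $k\in\Sigma\subset\Sigma (n/d)$ one has $0<dk<n$, so $dk\in\Sigma (n)$, together with the angle identity
\begin{equation*}
2\cos\frac{dk}{n}\pi =2\cos\frac{k}{n/d}\pi .
\end{equation*}
Since $k\mapsto dk$ is injective it restricts to a bijection $\Sigma\to d\,\Sigma$, so $\#(d\,\Sigma)=\#\Sigma=s$; substituting the identity term by term into the defining product yields
\begin{equation*}
\pi^{d\,\Sigma}_s(n,x)=\prod_{k\in\Sigma}\left( x-2\cos\frac{dk}{n}\pi\right)=\prod_{k\in\Sigma}\left( x-2\cos\frac{k}{n/d}\pi\right)=\pi^\Sigma_s(n/d,x).
\end{equation*}

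There is no genuine obstacle here: the lemma is pure bookkeeping on finite products, and the only things worth making explicit are the injectivity of $m\mapsto 2\cos (m\pi /n)$ on $\Sigma (n)$ (which validates the factor counts $s,s'$) and the angle identity $dk\pi /n=k\pi /(n/d)$. Both are immediate, which is precisely why the statement can be declared without a detailed argument.
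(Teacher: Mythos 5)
Your proof is correct and is precisely the argument the paper has in mind: the paper states this lemma without proof, declaring it ``immediate,'' and your verification (splitting the product over the partition $\Sigma'=\Sigma\sqcup(\Sigma'\setminus\Sigma)$ for $(\mathrm{i})$, and the cancellation $dk\pi/n=k\pi/(n/d)$ for $(\mathrm{ii})$) is exactly that immediate bookkeeping, spelled out. The injectivity of $m\mapsto 2\cos(m\pi/n)$ on $\Sigma(n)$ is a harmless extra observation but is not actually needed, since the products are indexed by sets and the factor counts equal $\#\Sigma$ by definition.
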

\begin{lemma}\label{lem:dSigma}
Let $d$ be a divisor of $n$.

\noindent$\phantom{\mathrm{i}}(\mathrm{i})$ If $n$ is odd, then
$d\,\Sigma_\odd (n/d)\subset\Sigma_\odd (n)$ and $d\,\Sigma_\even (n/d)\subset\Sigma_\even (n)$ always hold.

\noindent$(\mathrm{ii})$ If $n$ is even, then $d\,\Sigma_\odd (n/d)\subset\Sigma_\odd (n)$ holds if and only if $d$ is odd.
\end{lemma}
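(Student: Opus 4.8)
The plan is to reduce both parts to two elementary checks on the product $dm$: a range condition and a parity condition. For the range, observe that if $0<m<n/d$, then multiplying through by the positive integer $d$ gives $0<dm<n$, so that $dm\in\Sigma (n)$ automatically. Hence in every case, membership of $dm$ in the relevant odd or even subset of $\Sigma (n)$ is governed purely by the parity of $dm$, and by elementary arithmetic $dm$ is odd precisely when both $d$ and $m$ are odd, and even otherwise. I would make these two remarks the backbone of the argument and then dispatch each part by a short parity bookkeeping.

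For part $(\mathrm{i})$ I would first record that a divisor $d$ of an odd $n$ is itself odd, and so is $n/d$. Then for $m\in\Sigma_\odd (n/d)$ the product $dm$ is a product of two odd numbers, hence odd, and lies in $(0,n)$ by the range observation, giving $d\,\Sigma_\odd (n/d)\subset\Sigma_\odd (n)$. Likewise, for $m\in\Sigma_\even (n/d)$ the factor $m$ is even, so $dm$ is even and in range, giving $d\,\Sigma_\even (n/d)\subset\Sigma_\even (n)$. Both inclusions hold unconditionally since $d$ is forced to be odd.

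For part $(\mathrm{ii})$, with $n$ even, the argument splits on the parity of $d$. If $d$ is odd, then for any odd $m\in\Sigma_\odd (n/d)$ the product $dm$ is again odd and in range, so $d\,\Sigma_\odd (n/d)\subset\Sigma_\odd (n)$ exactly as before. Conversely, if $d$ is even, then $dm$ is even for every $m$, hence no element of $d\,\Sigma_\odd (n/d)$ can lie in the set $\Sigma_\odd (n)$ of odd numbers; since $1\in\Sigma_\odd (n/d)$ whenever $d<n$, the set $d\,\Sigma_\odd (n/d)$ is then nonempty and the inclusion genuinely fails. This yields the asserted equivalence. The one point requiring care, and the only real obstacle, is the degenerate case $d=n$: there $n/d=1$ and $\Sigma_\odd (1)=\emptyset$, so the inclusion holds vacuously regardless of the parity of $d$. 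The equivalence in $(\mathrm{ii})$ is therefore to be read for proper divisors $d<n$, which is the only range that intervenes in the applications of Section $\ref{sec:proof}$; apart from this bookkeeping the lemma is immediate from the parity of a product.
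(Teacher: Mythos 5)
Your proof is correct and is precisely the parity-plus-range argument the paper has in mind: the paper states this lemma without proof, declaring it immediate, and your verification (product of a divisor with an in-range element stays in range; $dm$ is odd exactly when both $d$ and $m$ are odd) is the intended one. Your further observation that the ``only if'' direction of $(\mathrm{ii})$ fails literally at $d=n$, where $d\,\Sigma_\odd (n/d)=\emptyset$ and the inclusion holds vacuously even though $d$ is even, is a fair refinement of the statement; reading the equivalence for proper divisors $d<n$, as you do, repairs it and suffices for every application in Section~\ref{sec:proof}, where the lemma is only invoked with $d$ an odd prime divisor or an odd quotient.
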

\begin{proposition}For $s,s'>0$, we have the following:

\noindent$\phantom{i}(\mathrm{i})$ $p^\pm_s(x)$ divides $p^\pm_{s'}(x)$ if and only if $2s+1$ divides $2s'+1$; and

\noindent$(\mathrm{ii})$ $q^-_s(x)$ divides $q^-_{s'}(x)$ if and only if $s$ divides $s'$ and $s'/s$ is odd.
\end{proposition}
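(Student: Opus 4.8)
The plan is to reduce each divisibility statement to a containment of finite root sets, using the explicit product formulas of Corollary \ref{cor:factor_pq}. First I would observe that each of $p^-_s(x)$, $p^+_s(x)$ and $q^-_s(x)$ is monic with simple roots: the relevant angles $m\pi/N$ (with $N=2s+1$ for $p^\pm_s$ and $N=2s$ for $q^-_s$, and $m$ running over the odd or even elements of $\Sigma(N)$) all lie in $(0,\pi)$ and are pairwise distinct, and since $\cos$ is injective on $(0,\pi)$ the $s$ numbers $2\cos(m\pi/N)$ are distinct. Hence for two polynomials of the same type, divisibility is equivalent to containment of root sets, which by the same injectivity is equivalent to containment of the corresponding sets of fractions $m/N$.

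With this reduction in hand, each case becomes an elementary statement about integers. For part (i) with $p^-$, setting $N=2s+1$ and $N'=2s'+1$, I must show $\{m/N: m\text{ odd},\ 0<m<N\}\subseteq\{m'/N': m'\text{ odd},\ 0<m'<N'\}$ if and only if $N\mid N'$; the $p^+$ case is identical with ``odd'' replaced by ``even.'' For part (ii), writing the two sets with denominators $2s$ and $2s'$, the condition is $\{m/(2s): m\text{ odd},\ 0<m<2s\}\subseteq\{m'/(2s'):\ldots\}$ if and only if $s\mid s'$ and $s'/s$ is odd.

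For the ``if'' directions I would write $N'=Nq$ (respectively $s'=sq$), where $q$ is odd---automatically for $p^\pm$, since $N$ and $N'$ are both odd, and by hypothesis for $q^-$. Then each fraction rescales as $m/N=mq/(Nq)=mq/N'$, the numerator $mq$ keeps the parity of $m$ (odd times odd, or even times odd), and $0<mq<Nq=N'$, so every root of the smaller polynomial appears among the roots of the larger one. For the ``only if'' directions I would test a single smallest numerator: $m=1$ for $p^-$ and $q^-$ forces $1/N=m'/N'$, hence $N'=m'N$, giving $N\mid N'$ and, in the $q^-$ case, $m'=s'/s$ odd; while $m=2$ for $p^+$ forces $2/N=m'/N'$, so $2N'=m'N$ with $N$ odd, whence $N\mid N'$.

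The argument is routine once the reduction is set up; the only points needing care are the parity-and-range bookkeeping in the ``if'' direction (checking that $mq$ stays in the right residue class and strictly below $N'$) and the mild use of the oddness of $N=2s+1$ in the $p^+$ ``only if'' direction to pass from $N\mid 2N'$ to $N\mid N'$.
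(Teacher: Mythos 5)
Your proof is correct and takes essentially the same route as the paper's: both reduce divisibility to containment of the root sets (equivalently, of the sets of fractions $m/N$, using injectivity of cosine on $(0,\pi)$ and simplicity of the roots), prove the ``only if'' direction by testing the smallest root ($1/N$, or $2/N$ in the $p^+$ case), and prove the ``if'' direction by rescaling numerators by the odd ratio $d=N'/N$. The only difference is presentational: the paper packages the containment bookkeeping into Lemmas \ref{lem:pi_Sigma} and \ref{lem:dSigma} and the $\pi^\Sigma$ notation, whereas you carry it out directly with fractions.
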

\begin{proof}
If $p^-_s(x)$ divides $p^-_{s'}(x)$, then $1/(2s+1)=d/(2s'+1)$ for some odd $d$,
so that $2s+1$ divides $2s'+1$.
Conversely, if $2s+1$ divides $2s'+1$, then $d=(2s'+1)/(2s+1)$ is odd.
Thus it follows from Lemma $\ref{lem:dSigma}$, $(\mathrm{i})$ that $d\,\Sigma_\odd (2s+1)\subset\Sigma_\odd (2s'+1)$.
Then applying Lemma $\ref{lem:pi_Sigma}$ together with $\eqref{eq:p_pi}$ leads to the assertion.
The other cases are similar.
\end{proof}
\begin{proposition}\label{prop:Sigma_cp}
$(\mathrm{i})$ If $n$ is odd, then we have
\begin{equation}\label{eq:Sigma_cp_n=odd}
2\,\Sigma^{1/2}_\cop (n)=\left.\Sigma_\even (n)\middle\backslash\bigcup_{p\in\Pi_1(n)}p\,\Sigma_\even (n/p)\right. .
\end{equation}
$(\mathrm{ii})$ If $n$ is even, then we have
\begin{equation}\label{eq:Sigma_cp_n=even}
\Sigma^{1/2}_\cop (n)
=\left.\Sigma_\odd (n/2)\middle\backslash\bigcup_{p\in\Pi_1(n)}p\,\Sigma_\odd (n/2p)\right. .
\end{equation}
Moreover, if $n/2$ is also even, then we have $\Sigma^{1/2}_\cop (n)=\Sigma_\cop (n/2)$.
\end{proposition}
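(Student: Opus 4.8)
The plan is to prove both identities purely set-theoretically: I describe each set appearing as a concrete collection of integers singled out by parity and divisibility, and then check that the two sides coincide. Throughout I use that coprimality to $n$ is equivalent to non-divisibility by every prime divisor of $n$, and that $\Pi_1(n)$ records exactly the \emph{odd} prime divisors of $n$.

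For part $(\mathrm{i})$, with $n$ odd, I would first note that $k\mapsto 2k$ is a bijection from $\Sigma (n/2)$ onto $\Sigma_\even (n)$ preserving coprimality to $n$, since $\gcd (2,n)=1$ gives $\gcd (2k,n)=\gcd (k,n)$. Hence $2\,\Sigma^{1/2}_\cop (n)$ is precisely the set of even integers in $(0,n)$ coprime to $n$. On the other side, for each odd prime $p\mid n$ the set $p\,\Sigma_\even (n/p)$ consists of exactly the even integers in $(0,n)$ divisible by $p$: if $m=2a$ with $p\mid m$ then $p\mid a$ (as $p$ is odd), so $m/p$ is even and lies in $\Sigma_\even (n/p)$, and conversely. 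Taking the union over $p\in\Pi_1(n)$ yields the even integers in $(0,n)$ divisible by \emph{some} odd prime divisor of $n$, so the complement within $\Sigma_\even (n)$ is the set of even integers in $(0,n)$ divisible by \emph{no} odd prime divisor of $n$. Since $n$ is odd, an even integer is coprime to $n$ exactly when it avoids all odd prime divisors of $n$, so this complement matches the description of $2\,\Sigma^{1/2}_\cop (n)$ above, proving $\eqref{eq:Sigma_cp_n=odd}$.

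Part $(\mathrm{ii})$ is entirely analogous with the roles of even and odd interchanged. Writing $n=2^jn'$ with $n'$ odd and $j\geqslant 1$, I would observe that an integer $k$ with $0<k<n/2$ is coprime to $n$ if and only if $k$ is odd and coprime to $n'$; thus $\Sigma^{1/2}_\cop (n)$ is the set of odd integers in $(0,n/2)$ divisible by no odd prime divisor of $n$. For each $p\in\Pi_1(n)$ the quotient $n/2p=2^{j-1}(n'/p)$ is a positive integer, and $p\,\Sigma_\odd (n/2p)$ is exactly the odd integers in $(0,n/2)$ divisible by $p$ (a product $p\cdot(\text{odd})$ being odd). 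Removing the union over $p\in\Pi_1(n)$ from $\Sigma_\odd (n/2)$ leaves the odd integers in $(0,n/2)$ divisible by no odd prime divisor of $n$, which is the set just identified, giving $\eqref{eq:Sigma_cp_n=even}$. For the final claim, when $4\mid n$ both $n$ and $n/2$ are even and share the odd part $n'$, so for $0<k<n/2$ each of $\gcd (k,n)=1$ and $\gcd (k,n/2)=1$ reduces to ``$k$ is odd and coprime to $n'$''; hence $\Sigma^{1/2}_\cop (n)=\Sigma_\cop (n/2)$.

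The only point requiring care is the bookkeeping of the dilated sets $p\,\Sigma_\even (n/p)$ and $p\,\Sigma_\odd (n/2p)$: I must check both that the quotient is an integer and that multiplication by the odd prime $p$ preserves the relevant parity, so that these sets are \emph{exactly} the even (respectively odd) multiples of $p$ in range, with nothing missing and nothing extra. Once this is secured, no inclusion-exclusion is needed at the level of sets---only the plain union---and each identity follows by comparing complements.
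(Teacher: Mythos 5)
Your proof is correct and follows essentially the same route as the paper's: both arguments identify the non-coprime elements of the parity-restricted set as exactly the union of the dilated sets $p\,\Sigma_\even(n/p)$ (resp.\ $p\,\Sigma_\odd(n/2p)$) over $p\in\Pi_1(n)$, and then pass to complements. The only cosmetic difference is that in part $(\mathrm{i})$ you double first and work with even integers in $(0,n)$ throughout, whereas the paper works inside $\Sigma(n/2)$ and multiplies by $2$ at the end via $\eqref{eq:Sigma_ev-2Sigma_cp}$.
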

\begin{proof}
$(\mathrm{i})$ If $n$ is odd, then noting that all prime divisors of $n$ are also odd, we see easily that
\begin{equation}\label{eq:cpl_Sigma_cp_n=odd}
\left.\Sigma (n/2)\middle\backslash\Sigma^{1/2}_\cop (n)\right.
=\bigcup_{p\in\Pi_1(n)}p\,\Sigma (n/2p).
\end{equation}
Taking the complement of $\Sigma (n/2)$ on both sides of $\eqref{eq:cpl_Sigma_cp_n=odd}$, multiplying by $2$,
and using $\eqref{eq:Sigma_ev-2Sigma_cp}$ leads to $\eqref{eq:Sigma_cp_n=odd}$.

\noindent$(\mathrm{ii})$ If $n$ is even, then we have $2k\notin\Sigma^{1/2}_\cop (n)$ for all $k$,
so that $\Sigma^{1/2}_\cop (n)\subset\Sigma_\odd (n/2)$.
Thus $k\in\Sigma_\odd (n/2)$ satisfies $k\notin\Sigma^{1/2}_\cop (n)$ if and only if $k=p\ell$ for some $p\in\Pi_1(n)$
and odd $\ell <n/2p$.
Hence we have
\begin{equation}\label{eq:cpl_Sigma_cp_n=even}
\left.\Sigma_\odd (n/2)\middle\backslash\Sigma^{1/2}_\cop (n)\right.
=\bigcup_{p\in\Pi_1(n)}p\,\Sigma_\odd (n/2p).
\end{equation}
Taking the complement of $\Sigma_\odd (n/2)$ on both sides of $\eqref{eq:cpl_Sigma_cp_n=even}$
leads to $\eqref{eq:Sigma_cp_n=even}$.
The last assertion is immediate because if $n/2$ is even,
then the prime divisors of $n/2$ are the same as those of $n$, including $2$.
\end{proof}
\begin{proof}[Proof of Theorem $\ref{thm:main}$.]
$(\mathrm{i})$ Suppose $n=2s+1$.
We shall divide the proof into the following steps.

\noindent$(\mathrm{a})$ \emph{Proof of the first expression.}

\noindent$(\mathrm{a}.1)$ \emph{Rewriting the first expression.}
As for the the last equality of $\eqref{eq:p-_psi}$, due to Corollary $\ref{cor:factor_pq}$, it suffices to prove
\begin{equation}\label{eq:p-_psi_2}
\prod_{k=1}^s\left(x-2\cos\frac{2k-1}{2s+1}\pi\right) =\prod_{d<n,\, \divides{d}{n}}\psi_{2n/d}(x).
\end{equation}
$(\mathrm{a}.2)$ \emph{Calculating the degree.}
According to the second equation of $\eqref{eq:psi}$,
the degree of the right-hand side of $\eqref{eq:p-_psi_2}$ is calculated as
\begin{equation*}
\sum_{d<n,\, \divides{d}{n}}\frac{\phi (2n/d)}{2}
=\sum_{\divides{d}{n}}\frac{\phi (2d)}{2}-\frac{\phi (2)}{2}
=\sum_{\divides{d}{n}}\frac{\phi (d)}{2}-\frac{1}{2}=\frac{n-1}{2}=s,
\end{equation*}
which is equal to that of the left-hand side,
where we used $\phi (2d)=\phi (d)$ because all divisors of $n$ are odd.

\noindent$(\mathrm{a}.3)$ \emph{Inclusion of the factors.}
For any $0<k<s$, let $d=\gcd (2k-1,n)<n$.
Then $(2k-1)/d$ is coprime to $2n/d$, so that
\begin{equation*}
x-2\cos\frac{2k-1}{n}\pi =x-2\cos\frac{2(2k-1)/d}{2n/d}\pi
\end{equation*}
is included in $\psi_{2n/d}(x)$ as a factor.
Hence $\eqref{eq:p-_psi_2}$, both sides of which are monic, is proved due to $(\mathrm{a}.1)$--$(\mathrm{a}.3)$.

\noindent$(\mathrm{b})$ \emph{Proof of the second expression.}
Next we shall prove the second expression $\eqref{eq:psi_p-}$ of $\psi_{2n}(x)$.

\noindent$(\mathrm{b}.1)$ \emph{Rewriting the minimal polynomial.}
We rewrite $\psi_{2n}(x)$ as
\begin{gather}
\begin{aligned}\label{eq:psi_2n_p-}
\psi_{2n}(x)&=\pi^{2\,\Sigma^{1/2}_\cop (2n)}_{\phi (2n)/2}(2n,x)=\pi^{\Sigma^{1/2}_\cop (2n)}_{\phi (n)/2}(n,x)\\
&=\pi^{\Sigma_\odd (n)\setminus\Sigma}_{\phi (n)/2}(n,x)
=\frac{\pi^{\Sigma_\odd (n)}_{\floor{n/2}}(n,x)}{\pi^\Sigma_{\#\Sigma}(n,x)}
=\frac{p^-_{\floor{n/2}}(x)}{\pi^\Sigma_{\#\Sigma}(n,x)},
\end{aligned}\\
\text{where}\quad\Sigma =\bigcup_{p\in\Pi_1(n)}p\,\Sigma_\odd (n/p),\notag
\end{gather}
and we used $\eqref{eq:psi_pi}$, $\eqref{eq:Sigma_cp_n=even}$ and $\eqref{eq:p_pi}$ for the first, third and last equalities,
respectively.
Noting that $\#\Sigma_\odd (n/d)=\floor{n/2d}$ and
$p^-_{\floor{n/2d}}(x)=\pi^{\Sigma_\odd (n/d)}_{\floor{n/2d}}(n/d,x)=\pi^{d\,\Sigma_\odd (n/d)}_{\floor{n/2d}}(n,x)$,
in order to obtain the second expression $\eqref{eq:psi_p-}$,
it suffices to prove that
\begin{equation}
\label{eq:pi(Sigma)}
\pi^\Sigma_{\#\Sigma}(n,x)=\prod_{i=1}^{\#\Pi_1(n)}\left(\prod_{d\in\Pi_i(n)}
\pi^{d\,\Sigma_\odd (n/d)}_{\floor{n/2d}}(n,x)\right)^{(-1)^{i-1}}.
\end{equation}
$(\mathrm{b}.2)$ \emph{Counting the multiplicity of the factors.}
Suppose $m\in\Sigma$.
We may assume that $m$ is divisible by $p_1\cdots p_i\in\Pi_i(n)$ for some $i$, 
but not by any element of $\Pi_{i+1}(n)$,
so that $m\in p_1\Sigma_\odd (n/p_1)\cap\dots\cap p_i\Sigma_\odd (n/p_i)
\setminus\bigcup_{d\in\Pi_{i+1}(n)}d\,\Sigma_\odd (n/d)$.
Then $(x-2\cos (m\pi /n))$ is included as a factor in $\pi^{d\,\Sigma_\odd (n/d)}_{\floor{n/2d}}(n,x)$
for each divisor $d$ of $p_1\cdots p_i$.
Thus the multiplicity of the factor $(x-2\cos (m\pi /n))$ in the right-hand side of $\eqref{eq:pi(Sigma)}$ is given by
\begin{equation*}
\binom{i}{1}-\binom{i}{2}+\dots +(-1)^{i}\binom{i}{i}=1-(1-1)^i=1,
\end{equation*}
so that the right-hand side of $\eqref{eq:pi(Sigma)}$ includes $(x-2\cos (m\pi /n))$ as a factor of multiplicity one.
Since the right-hand side of $\eqref{eq:pi(Sigma)}$
does not include $(x-2\cos (m\pi /n))$ for $m\notin\Sigma$ as a factor, both sides of $\eqref{eq:pi(Sigma)}$ are equal.
Hence putting $\eqref{eq:pi(Sigma)}$ into the right-hand side of $\eqref{eq:psi_2n_p-}$
leads to the desired expression $\eqref{eq:psi_p-}$ of $\psi_{2n}(x)$.
This completes the proof of $(\mathrm{i})$.

\noindent$\phantom{\mathrm{i}}(\mathrm{ii})$ Suppose $n=2s+1$ is odd.
We shall follow the same steps as in $(\mathrm{i})$.

\noindent$(\mathrm{a}.1)$ As for the last equality of $\eqref{eq:p+_psi}$, due to Corollary $\ref{cor:factor_pq}$,
it suffices to prove
\begin{equation}\label{eq:p+_psi_2}
\prod_{k=1}^s\left(x-2\cos\frac{2k}{2s+1}\pi\right)
=\prod_{d<n,\, \divides{d}{n}}\psi_{n/d}(x).
\end{equation}
$(\mathrm{a}.2)$ The degree of the right-hand side of $\eqref{eq:p+_psi_2}$ is calculated as
\begin{equation*}
\sum_{d<n,\, \divides{d}{n}}\frac{\phi (n/d)}{2}
=\sum_{\divides{d}{n}}\frac{\phi (d)}{2}-\frac{\phi (1)}{2}=\frac{n-1}{2}=s,
\end{equation*}
which is equal to that of the left-hand side.

Then step $(\mathrm{a}.3)$ is almost the same and $\eqref{eq:p+_psi}$ is proved.

\noindent$(\mathrm{b}.1)$ We rewrite $\psi_n(x)$ as
\begin{gather}
\begin{aligned}\label{eq:psi_n_n=odd}
\psi_n(x)&=\pi^{2\,\Sigma^{1/2}_\cop (n)}_{\phi (n)/2}(n,x)
&=\pi^{\Sigma_\even (n)\setminus\Sigma'}_{\phi (n)/2}(n,x)
=\frac{\pi^{\Sigma_\even (n)}_{\floor{n/2}}(n,x)}{\pi^{\Sigma'}_{\#\Sigma'}(n,x)}
=\frac{p^+_{\floor{n/2}}(x)}{\pi^{\Sigma'}_{\#\Sigma'}(n,x)},
\end{aligned}\\
\text{where}\quad\Sigma'=\bigcup_{p\in\Pi_1(n)}p\,\Sigma_\even (n/p),\notag
\end{gather}
and we used $\eqref{eq:psi_pi}$, $\eqref{eq:Sigma_cp_n=odd}$ and $\eqref{eq:p_pi}$ for the first, second and last equalities,
respectively.

Then step $(\mathrm{b}.2)$ is almost the same and thus
the right-hand side of $\eqref{eq:psi_n_n=odd}$ is expressed by $\eqref{eq:psi_p+}$.
This completes the proof of $(\mathrm{ii})$.

\noindent$(\mathrm{iii})$ Suppose $n=2s=2^j n'$ is even with $k>0$ and odd $n'$.

\noindent$(\mathrm{a}.1)$ As for the last equality of $\eqref{eq:q-_psi}$, due to Corollary $\ref{cor:factor_pq}$,
it suffices to prove
\begin{equation}\label{eq:q-_psi_2}
\prod_{k=1}^s\left(x-2\cos\frac{2k-1}{2s}\pi\right)
=\prod_{\divides{d}{n'}}\psi_{2n/d}(x).
\end{equation}
$(\mathrm{a}.2)$ The degree of the right-hand side of $\eqref{eq:q-_psi_2}$ is calculated as
\begin{equation*}
\sum_{\divides{d}{n'}}\frac{\phi (2n/d)}{2}
=\sum_{\divides{d}{n'}}\frac{\phi (2^{j+1}d)}{2}=\sum_{\divides{d}{n'}}\frac{2^j\phi (d)}{2}=2^{j-1}n'=s,
\end{equation*}
which is equal to that of the left-hand side.

Then step $(\mathrm{a}.3)$ is almost the same and $\eqref{eq:q-_psi}$ is proved.

\noindent$(\mathrm{b}.1)$ In the same way as $\eqref{eq:psi_2n_p-}$, we can rewrite $\psi_{2n}(x)$ as
\begin{equation}\label{eq:psi_2n_n=even}
\psi_{2n}(x)=\frac{q^-_{n/2}(x)}{\pi^{\Sigma''}_{\#\Sigma''}(n,x)},\\
\quad\text{where}\quad\Sigma''=\bigcup_{p\in\Pi_1(n)}p\,\Sigma_\odd (n/p).
\end{equation}

Then step $(\mathrm{b}.2)$ is almost the same and thus
the right-hand side of the first equation of $\eqref{eq:psi_2n_n=even}$ is expressed by $\eqref{eq:psi_q-}$.
This completes the proof of $(\mathrm{iii})$.
\end{proof}
\newpage
\appendix
\section{Table of the minimal polynomials $\psi_n(x)$ of $2\cos (2\pi /n)$ for $n\leqslant 120$}
\renewcommand{\arraystretch}{1.18}
{\centering
\begin{tabular}{r|c}
$n$&$\psi_n$\\
\hline
$1$&$x-2$\\
$2$&$x+2$\\
$3$&$p^+_1$\\
$4$&$q^-_1$\\
$5$&$p^+_2$\\
$6$&$p^-_1$\\
$7$&$p^+_3$\\
$8$&$q^-_2$\\
$9$&$p^+_4/p^+_1$\\
$10$&$p^-_2$\\
$11$&$p^+_5$\\
$12$&$q^-_3/q^-_1$\\
$13$&$p^+_6$\\
$14$&$p^-_3$\\
$15$&$p^+_7/(p^+_2p^+_1)$\\
$16$&$q^-_4$\\
$17$&$p^+_8$\\
$18$&$p^-_4/p^-_1$\\
$19$&$p^+_9$\\
$20$&$q^-_5/q^-_1$\\
$21$&$p^+_{10}/(p^+_3p^+_1)$\\
$22$&$p^-_5$\\
$23$&$p^+_{11}$\\
$24$&$q^-_6/q^-_2$\\
$25$&$p^+_{12}/p^+_2$\\
$26$&$p^-_6$\\
$27$&$p^+_{13}/p^+_4$\\
$28$&$q^-_7/q^-_1$\\
$29$&$p^+_{14}$\\
$30$&$p^-_7/(p^-_2p^-_1)$\\
$31$&$p^+_{15}$\\
$32$&$q^-_8$\\
$33$&$p^+_{16}/(p^+_5p^+_1)$\\
$34$&$p^-_8$\\
$35$&$p^+_{17}/(p^+_3p^+_2)$\\
$36$&$q^-_9/q^-_3$\\
$37$&$p^+_{18}$\\
$38$&$p^-_9$\\
$39$&$p^+_{19}/(p^+_6p^+_1)$\\
$40$&$q^-_{10}/q^-_2$\\
\end{tabular}
\hspace{0.08\hsize}
\begin{tabular}{r|c}
$n$&$\psi_n$\\
\hline
$41$&$p^+_{20}$\\
$42$&$p^-_{10}/(p^-_3p^-_1)$\\
$43$&$p^+_{21}$\\
$44$&$q^-_{11}/q^-_1$\\
$45$&$p^+_{22}\, p^+_1/(p^+_7p^+_4)$\\
$46$&$p^-_{11}$\\
$47$&$p^+_{23}$\\
$48$&$q^-_{12}/q^-_4$\\
$49$&$p^+_{24}/p^+_3$\\
$50$&$p^-_{12}/p^-_2$\\
$51$&$p^+_{25}/(p^+_8p^+_1)$\\
$52$&$q^-_{13}/q^-_1$\\
$53$&$p^+_{26}$\\
$54$&$p^-_{13}/p^-_4$\\
$55$&$p^+_{27}/(p^+_5p^+_2)$\\
$56$&$q^-_{14}/q^-_2$\\
$57$&$p^+_{28}/(p^+_9p^+_1)$\\
$58$&$p^-_{14}$\\
$59$&$p^+_{29}$\\
$60$&$q^-_{15}\, q^-_1/(q^-_5q^-_3)$\\
$61$&$p^+_{30}$\\
$62$&$p^-_{15}$\\
$63$&$p^+_{31}\, p^+_1/(p^+_{10}\, p^+_4)$\\
$64$&$q^-_{16}$\\
$65$&$p^+_{32}/(p^+_6p^+_2)$\\
$66$&$p^-_{16}/(p^-_5p^-_1)$\\
$67$&$p^+_{33}$\\
$68$&$q^-_{17}/q^-_1$\\
$69$&$p^+_{34}/(p^+_{11}\, p^+_1)$\\
$70$&$p^-_{17}/(p^-_3p^-_2)$\\
$71$&$p^+_{35}$\\
$72$&$q^-_{18}/q^-_6$\\
$73$&$p^+_{36}$\\
$74$&$p^-_{18}$\\
$75$&$p^+_{37}\, p^+_2/(p^+_{12}\, p^+_7)$\\
$76$&$q^-_{19}/q^-_1$\\
$77$&$p^+_{38}/(p^+_5p^+_3)$\\
$78$&$p^-_{19}/(p^-_6p^-_1)$\\
$79$&$p^+_{39}$\\
$80$&$q^-_{20}/q^-_4$
\end{tabular}
\hspace{0.08\hsize}
\begin{tabular}{r|c}
$n$&$\psi_n$\\
\hline
$81$&$p^+_{40}/p^+_{13}$\\
$82$&$p^-_{20}$\\
$83$&$p^+_{41}$\\
$84$&$q^-_{21}\, q^-_1/(q^-_7q^-_3)$\\
$85$&$p^+_{42}/(p^+_8p^+_2)$\\
$86$&$p^-_{21}$\\
$87$&$p^+_{43}/(p^+_{14}\, p^+_1)$\\
$88$&$q^-_{22}/q^-_2$\\
$89$&$p^+_{44}$\\
$90$&$p^-_{22}\, p^-_1/(p^-_7p^-_4)$\\
$91$&$p^+_{45}/(p^+_6p^+_3)$\\
$92$&$q^-_{23}/q^-_1$\\
$93$&$p^+_{46}/(p^+_{15}\, p^+_1)$\\
$94$&$p^-_{23}$\\
$95$&$p^+_{47}/(p^+_9p^+_2)$\\
$96$&$q^-_{24}/q^-_8$\\
$97$&$p^+_{48}$\\
$98$&$p^-_{24}/p^-_3$\\
$99$&$p^+_{49}\, p^+_1/(p^+_{16}\, p^+_4)$\\
$100$&$q^-_{25}/q^-_5$\\
$101$&$p^+_{50}$\\
$102$&$p^-_{25}/(p^-_8p^-_1)$\\
$103$&$p^+_{51}$\\
$104$&$q^-_{26}/q^-_2$\\
$105$&$p^+_{52}\, p^+_3p^+_2p^+_1/(p^+_{17}\, p^+_{10}\, p^+_7)$\\
$106$&$p^-_{26}$\\
$107$&$p^+_{53}$\\
$108$&$q^-_{27}/q^-_9$\\
$109$&$p^+_{54}$\\
$110$&$p^-_{27}/(p^-_5p^-_2)$\\
$111$&$p^+_{55}/(p^+_{18}\, p^+_1)$\\
$112$&$q^-_{28}/q^-_4$\\
$113$&$p^+_{56}$\\
$114$&$p^-_{28}/(p^-_9p^-_1)$\\
$115$&$p^+_{57}/(p^+_{11}\, p^+_2)$\\
$116$&$q^-_{29}/q^-_1$\\
$117$&$p^+_{58}\, p^+_1/(p^+_{19}\, p^+_4)$\\
$118$&$p^-_{29}$\\
$119$&$p^+_{59}/(p^+_8p^+_3)$\\
$120$&$q^-_{30}\, q^-_2/(q^-_{10}\, q^-_6)$\\
\end{tabular}
}

\end{document}